\newcommand{\ind}[1]{\mathbbm{1}_{\{#1\}}}
\newtheorem{theorem}{Theorem}[section]
\newtheorem{proposition}[theorem]{Proposition}
\newtheorem{lemma}[theorem]{Lemma}
\theoremstyle{remark}
\newtheorem{remark}{Remark}
\newtheorem{definition}[theorem]{Definition}
\definecolor{darkgreen}{rgb}{0,0.7,0}
\definecolor{darkred}{rgb}{0.5,0,0}
\definecolor{ultramarine}{rgb}{0.07, 0.04, 0.56}
\begin{document}


\title{Killed path-dependent McKean–Vlasov SDEs for a\\probabilistic representation of non-conservative McKean PDEs}
 
\author{Daniela Morale\thanks{Dept. of Mathematics, University of Milano, \href{mailto:daniela.morale@unimi.it}{daniela.morale@unimi.it}} , Leonardo Tarquini\thanks{Dept. of Mathematics, University of Oslo, \href{mailto:leonardt@math.uio.no}{leonardt@math.uio.no}} , Stefania Ugolini\thanks{Dept. of Mathematics, University of Milano, \href{mailto:stefania.ugolini@unimi.it}{stefania.ugolini@unimi.it}}}

\maketitle
\begin{abstract} 
    A McKean-Vlasov stochastic differential equation subject to killing associated to a  regularised non-conservative and path-dependent nonlinear parabolic partial differential equation is studied. The existence and pathwise  uniqueness of a strong solution and the regularity properties of its sub-probability law are proved. The density of such a law may be seen as a weak solution of the considered PDE. The well-posedness  of the associated particle system is also discussed.
\end{abstract}

{\bf Keywords: }McKean-Vlasov-type nonlinear stochastic differential equation, Nonlinear reaction-diffusion PDEs, Interacting particle systems, SDE with killing

{\bf MSC: }60H10, 60H30, 60K35, 60J60, 60J75, 60J85,  82C22, 82C31 

\section{Introduction}\label{sec:introduction}
In this paper, we derive a McKean-Vlasov-type stochastic differential equation (SDE) with killing, whose time marginals satisfy a  highly nonlinear, non-conservative, and path-dependent McKean-type partial differential equation (PDE). The main goal is to conduct an analytical study of the well-posedness of both such an SDE and its associated particle system.

The reaction-diffusion PDE model reads as follows. For any  $(t, x) \in (0, T] \times \mathbb{R}$,  
\begin{equation}\label{eq:PDE_rho_regularised}
    \begin{split}
        \partial_t\rho(t,x) \,=\, &\Delta\rho (t,x)\,-\, \nabla \cdot \left[ b\big(K*\rho (\cdot,x)(t), \nabla K*\rho(\cdot,x)(t)\big) \rho(t,x)\, \right] \\
        &-c\left(K*\rho(\cdot,x)(t)\right)\rho(t,x),
    \end{split}
\end{equation}
with initial condition given by the probability density \( \rho(0,x) = \rho_0(x)\), where the advection term $b$ and the reaction coefficient $c$ satisfy suitable regularity properties.

In \eqref{eq:PDE_rho_regularised}, \(K\) is a smooth mollifier (to be defined later), and \(*\) denotes the spatial convolution operation. The notation \(\rho(\cdot, x)(t)\) stands for the time integral of \(\rho\) at position \(x\), that is,
\begin{equation}\label{eq:def_rho_integral_and_convolution}
\begin{split}
\rho(\cdot,x)(t) &:= \int_0^t \rho(s,x)\,ds, \\
K * \rho(\cdot,x)(t) &:= \int_0^t \int_{\mathbb{R}} K(x - y)\rho(y,s)\,dy\,ds.
\end{split}
\end{equation}
 
Equation \eqref{eq:PDE_rho_regularised} is non-conservative due to the reaction term. Therefore, whenever $\rho_0$ is a probability density, for every $t\in [0,T]$,
 \begin{equation}\label{eq:rho_prob_density}
     \int_{\mathbb R} \rho(t,x)dx\leq 1.
 \end{equation}

We propose a probabilistic interpretation of equation \eqref{eq:PDE_rho_regularised}, viewed as the macroscopic model describing the evolution of the density of a sub-probability measure \(\nu_t\). This measure is defined on \(\mathcal{B}_{\mathbb{R}}\) by
\[
\nu_t = \mathbb{P}\left( X_t \in \cdot,\ t < \tau \right),
\]
where \(X = \{X_t\}_t\) is a stochastic process solving the following killed McKean--Vlasov SDE
\[
X_t = X_0 + \int_0^t b\left(\int_0^s K*\nu_r(X_s)\,dr,\ \int_0^s \nabla K*\nu_r(X_s)\,dr\right)\,ds + \sqrt{2} W_t; \quad t < \tau,
\]
where  the stopping time $\tau$, defined as
\begin{equation*}
    \tau \,:=\, \inf_{t\geq 0}\Bigg\{ \int_0^t c\left( \int_0^s K*\nu_r(X_s)dr \right)ds \,\geq\, Z \Bigg\},
\end{equation*}
represents the reaction time of the typical particle that moves at the microscale following the SDE.

Motivated by applications, in the present paper, we consider a specific form of the coefficients $b$ and $c$, for which we prove that the aforementioned regularity conditions are satisfied. Specifically, the advection     and the reaction  terms are  defined, respectively as   
\begin{equation}\label{eq:def_b_PDE}
\begin{aligned}
    b(x, y) &:= -\varphi_1\lambda c_0 \frac{e^{-\lambda x} y}{\varphi_0 + \varphi_1 c_0 e^{-\lambda x}},  &&\quad  (x, y) \in \mathbb{R}^2, \\
    c(x) &:= \lambda c_0 e^{-\lambda x}, &&\quad   x \in \mathbb{R}
\end{aligned}
\end{equation}
where $\lambda \in \mathbb{R}_+.$
The parameters $\varphi_0\in\mathbb{R}_+$ and $\varphi_1\in\mathbb{R}$ are such that there exist two positive constants $m$ and $M$ satisfying, for any $(t,x) \in [0,T]\times\mathbb{R}$,
\begin{equation*}
    m\leq \varphi_0+\varphi_1c_0 \exp\big( -\lambda \rho(\cdot,x)(t)\big)\leq M.
\end{equation*}

A first probabilistic representation of the PDE system \eqref{eq:PDE_rho_regularised} with $b$ and $c$ given by \eqref{eq:def_b_PDE}, respectively, has been proposed by the same authors in \cite{2024_MTU_arxiv} as the mean-field limit of a system of Brownian particles governed by a non-conservative evolution. Specifically, the microscopic model consists of a nonlinear SDE coupled with a Feynman–Kac-type equation, whose well-posedness is established along with its connection to \eqref{eq:PDE_rho_regularised},\eqref{eq:def_b_PDE}. Propagation of chaos for the associated particle system to such a stochastic model is also proved, showing convergence of the empirical measure to the solution of the PDE. The aforementioned work extends the probabilistic approach to non-conservative McKean-type equations proposed in \cite{2016_Russo,2021_IzydorczykOudjaneRusso} to the path-dependent setting.

In the present paper, we adopt a complementary modelling perspective based on a killed McKean-Vlasov SDE, where the killing rate is associated to the reaction term of the PDE model \eqref{eq:PDE_rho_regularised},\eqref{eq:def_b_PDE}. We prove well-posedness and show that the time marginals of the process yield a weak solution to the PDE. The novelty of the present work lies in introducing an equivalent formulation from a more probabilistic perspective, based on a killed process, and in clarifying its connection to the previous analytic representation \cite{2024_MTU_arxiv}. Moreover, the present paper provides a unified treatment of McKean-Vlasov SDEs with killing and their associated non-conservative PDEs, particularly in the path-dependent case. Also, it can be easily extended to the case of a generic diffusion coefficient.

Beyond its analytical and probabilistic viewpoints, the study is also motivated by its potential real world applications. In fact, equation \eqref{eq:PDE_rho_regularised} is a regularised version of  a specific PDE coupled with an ordinary differential equation (ODE), known within the cultural heritage literature of marble degradation, in particular for the modelling of the reaction of calcium carbonate  under the interaction with diffusing sulphur dioxide  \cite{2007_AFNT2_TPM, 2005_GN_NLA}. The regularisation consists in replacing the spatially local dependence of the advection and reaction terms on \(\rho\) with a spatially non-local one, mediated by the convolution with a smooth kernel \(K\). More precisely, the advection and reaction terms now depend on the measure \(\rho(x,t)\,dx\) through the convolution \(K * \rho\), which can be interpreted as a non-local, or \emph{regularised}, concentration. For a detailed derivation of the mentioned ODE-PDE system, of the regularised version, and for an account of the qualitative behaviour of the solution on the half-line, interested readers may consult \cite{2004_ADN, 2025_MauMorUgo,2025_MACH2023_AMMU,2024_MTU_arxiv} and the references therein.
 
Over the past few decades, many reaction-diffusion models for the sulphation phenomenon have emerged within the deterministic framework and have been extensively investigated both analytically and numerically (see, e.g.,\cite{2007_AFNT_TPM,2007_AFNT2_TPM,2023_Bonetti_natalini_NLA,2019_BCFGN_CPAA,2005_GN_NLA,2007_GN_CPDE}). However, some stochastic approaches to degradation modelling have only recently emerged \cite{MACH2021_AGMMU,2024_ArceciMoraleUgolini_arxiv,2025_MauMorUgo,2025_Mach2023_MRU,2024_MTU_arxiv}. At the macroscale, an extension of the dynamics described by \eqref{eq:PDE_rho_regularised} has been proposed by coupling it with a source of randomness through a stochastic dynamical boundary condition, and  the well posedness of such  a random PDE has been studied \cite{2025_MACH2023_AMMU,2025_MauMorUgo}. The qualitative behaviour of the solutions to such a stochastic boundary value problem has been investigated from a numerical point of view in \cite{2024_ArceciMoraleUgolini_arxiv}, and some rigorous convergence results of some related discrete schemes in a discrete Besov space framework have been established in \cite{2025_ADMU_arxiv}. Furthermore, two stochastic interacting particle models have been proposed at a nanoscale \cite{2025_Mach2023_MRU,2025_JMMRU_Arxiv}. While a complete analytical investigation is still lacking, in the cited papers, stochastic particle models for the dynamics of the calcium carbonate, sulphur dioxide, and gypsum molecules have been introduced and the qualitative behaviour of their solutions  have been investigated. The work presented in \cite{2024_MTU_arxiv}, together with the current paper, aims at providing a rigorous probabilistic foundation for the PDE system \eqref{eq:PDE_rho_regularised},\eqref{eq:def_b_PDE} by identifying and analysing the underlying microscopic stochastic dynamics. These studies address to interpret the PDE as a macroscopic mean-field limit of a system of interacting Brownian particles, offering a microscale model able to capture essential features of the physical phenomenon and, particularly, of the reactive and non-conservative nature of the dynamics.

The paper is organized as follows. In Section \ref{sec:killed_SDE}, the killed McKean-Vlasov SDE associated with the PDE model \eqref{eq:PDE_rho_regularised},\eqref{eq:def_b_PDE} is introduced. Specifically, after proving some regularity properties of the coefficients   \eqref{eq:def_b_PDE}, the well-posedness of the stochastic model is established. In Section \ref{sec:probabilistic_interpretation}, the link between the killed McKean-Vlasov SDE and the PDE model \eqref{eq:PDE_rho_regularised},\eqref{eq:def_b_PDE} is described. Finally, in Section \ref{sec:PS_and_chaos}, the particle system associated with the stochastic model is introduced and its well-posedness is proved.

\paragraph{Probability space and cemetery state.} Let $(\Omega, \mathcal{F},\mathbb{F}=\{\mathcal{F}_t\}_{t\in [0,T]},\mathbb{P})$ be a filtrated probability space and choose $T \in \mathbb R_+$ to be a sufficiently large time horizon. All stochastic processes throughout the paper are assumed to be defined and adapted to $\mathbb{F}$.  

We denote by $\{\Delta\}$ a \emph{cemetery state} isolated from $\mathbb{R}$ and consider the convention that any function $f$ defined on $\mathbb{R}$ can be extended to $\mathbb{R}\cup\{\Delta\}$ by setting $f(\Delta)=0$.

\section{A killed McKean-Vlasov SDE}\label{sec:killed_SDE}
Let $\tau$ be an $\mathbb{F}$-stopping time and $I\left(\tau,T\right):=[0,\tau)\cup[0, T]$. Let $X=\{X_t\}_{t\in[0,T]}$ be an $\mathbb R \cup \{\Delta\}$ - valued stochastic process solution of the following killed McKean-Vlasov stochastic differential equation
\begin{equation}\label{eq:killed_SDE_measure}
    \begin{aligned}
        X_t &= X_0 + \int_0^t b\left(\int_0^s K*\nu_r(X_s)dr, \int_0^s\nabla K*\nu_r(X_s)dr\right)ds + \sqrt{2}W_t,
        &\quad t &\in I\left(\tau,T\right); \\
        X_t &\in \{\Delta\}, 
        &\quad t &\geq \tau,
    \end{aligned}
\end{equation}
where for any  $t\in[0,T]$, $\nu_t$ is the law of $X_t$ whenever $t$ is a survival time, i.e. it is the sub-probability measure  defined on $\mathcal{B}_{\mathbb R}$ as $\nu_t(\cdot) =\mathbb{P}\left( X_t\in \cdot, t<\tau \right)$.

System \eqref{eq:killed_SDE_measure} is subject to the initial conditions
\begin{equation}\label{eq:inital_datum_SDE}
    X_0 \sim \zeta_0;\quad \mathbb{E}[|\zeta_0|^2]< \infty;\quad \mathcal{L}(\zeta_0)=\nu_0;\quad \nu_0(dx)=\rho_0(x)dx;\quad \rho_0 \in L^2(\mathbb{R})\cap C_b(\mathbb R).
\end{equation}
For example, one could take $\zeta_0$ to have normal distribution. Moreover, it is adopted as convention $X_{\tau}:=\lim_{t\rightarrow \tau^{-}}X_t$.

The stopping time $\tau=\tau(X)$ represents the reaction time and it is defined as
\begin{equation}\label{eq:def_killing_measure}
    \tau =\tau(X) \,:=\, \inf_{t\geq 0}\Bigg\{ \int_0^t \lambda c_0\exp\left( -\lambda \int_0^s K*\nu_r(X_s)dr \right)ds \,\geq\, Z \Bigg\},
\end{equation}
where $Z$ is an exponentially distributed random variable with mean 1, ${Z\sim Exp(1)}$, independent of $X$.

From the above definition of $\tau$, we derive the survival probability
\begin{equation*}
    \mathbb{P}\left(\tau>t|\mathcal{F}_t\right) = \mathbb E\left[ \mathbbm{1}_{(t,\infty)}\left(\tau(X)\right) |\mathcal{F}_t\right]= \exp\left(-\int_0^t \lambda c_0\exp\left( -\lambda \int_0^s K*\nu_r(X_s)dr \right)ds \right).
\end{equation*}
This yields the following equation for  $\nu_t$. For any $A\in \mathcal{B}_{\mathbb R}$,
\begin{equation}\label{eq:subprob_killed_SDE}
    \begin{aligned}
        \nu_t(A)&=\mathbb{E}\left[\mathbbm{1}_{A}(X_t)\mathbbm{1}_{(t,\infty)}\left(\tau(X)\right)\right]\\
        &=\mathbb{E}\left[\mathbbm{1}_{A}(X_t)\mathbb{E}\left[\mathbbm{1}_{(t,\infty)}\left( \tau(X)\right)|\mathcal{F}_t\right]\right]\\
        &=\mathbb{E}\left[\mathbbm{1}_{A}(X_t) \exp\left(-\int_0^t \lambda c_0\exp\left( -\lambda \int_0^s K*\nu_r(X_s)dr \right)ds \right)\right].
    \end{aligned}
\end{equation}
Thus, by \eqref{eq:subprob_killed_SDE}, for any test function $f\in C_b^{\infty}(\mathbb{R})$,
\begin{equation}\label{eq:subprob_killed_SDE_int}
    \int_{\mathbb{R}}f(x)\nu_t(dx)=\mathbb{E}\left[f(X_t) \exp\left(-\int_0^t \lambda c_0\exp\left( -\lambda \int_0^s K*\nu_r(X_s)dr \right)ds \right)\right].
\end{equation}
\begin{remark}\label{remark:linking_equation}
    Equation \eqref{eq:subprob_killed_SDE_int} reads as the weak version of the Feynman-Kac-type equation considered in \cite{2024_MTU_arxiv}.
\end{remark}

We may extend $\nu_t$ to the full space $\mathbb{R} \cup \{\Delta\}$ by setting
\begin{equation*}
    \nu_t(\{\Delta\})=\mathbb{P}\left( X_t\in \{\Delta\}, t\geq \tau \right)=1-\nu_t(\mathbb R),
\end{equation*}
reflecting the total mass lost due to the killing.

The stopping time $\tau$ may alternatively be viewed as the first and only jump of an inhomogeneous Poisson point process. Let $N^0$ be a standard Poisson process independent of $W$ and $X_0$, and define the time and space-dependent intensity function
\begin{equation*}
    \overline{\lambda}\left(t,x\right):= \lambda c_0\exp\left( -\lambda\int_0^t K*\nu_s(x)ds\right)\quad (t,x)\in[0,T]\times\mathbb{R}.
\end{equation*}
Also defined is the cumulative intensity process
\begin{equation}\label{eq:def_compensator_poisson}
    \Lambda_t := \int_0^t \bar{\lambda}\left(s,X_t\right)ds
\end{equation}
and constructed is an inhomogeneous Poisson process via the time change
\begin{equation}\label{eq:def_poisson}
    N_t := N_{\Lambda_t}^{0}.
\end{equation}
Importantly, the process $\Lambda_t$ serves as the compensator of $N$. Then, the killing time $\tau$ can be equivalently expressed as the first jump time of $N$, i.e.,
\begin{equation*}
    \tau:=\inf\{t\geq 0 \,:\, N_t=1\}.
\end{equation*}
This alternative representation allows for another expression of $\nu_t$. That is, for any $A\in\mathcal{B}_{\mathbb{R}}$,
\begin{equation}\label{eq:subprob_killed_SDE_2}
    \nu_t(A)= \mathbb{E}\left[\mathbbm{1}_{A}(X_t) \exp\left(- \Lambda_t \right)\right]=\mathbb P\left( X_t\in A, \Lambda_t<Z\right).
\end{equation}

\subsection{Regularity properties of the PDE coefficients}
Let us first establish the kind of kernel $K$ we consider for modelling the non local interaction.
\begin{definition}\label{def:kernel}
     A kernel $K: \mathbb{R} \rightarrow \mathbb{R}^+$ is said to be a \emph{smooth mollifier} if it satisfies the following properties:
\begin{itemize}
    \item[i)]  $K\in C^{\infty}(\mathbb{R})$.
    \item[ii)]  $K$ is bounded with its first and second derivate, i.e.  there exist constants $M_K,M_K^\prime,M_K^{\prime\prime}\in \mathbb R_+$ such that for any $y\in\mathbb{R}$,
    \begin{equation}\label{eq:K_MK}
     |K(y)|\leq M_K, \quad  |\nabla K(y)|\leq M_K^\prime, \quad  |\nabla^2 K(y)|\leq M_K^{\prime\prime}.   
    \end{equation}
    \item[iii)] $K$ and its gradient $\nabla K$ are Lipschitz continuous, i.e. there exist two constants $L_K,L'_K \in \mathbb R_+$  such that for any $y,y'\in\mathbb{R}$,
    \begin{align}
        |K(y')\,-\,K(y)|\,&\leq\,L_K|y'\,-\,y|;\label{eq:K_LK}&&\\
        |\nabla K(y')\,-\,\nabla K(y)|\,&\leq\,L'_K|y'\,-\,y|.\label{eq:nablaK_LprimeK}&&
    \end{align}
    \item[iv)] $K$ is a probability density, that is $\int_{\mathbb{R}}K(y)\,dy \,=\,1$.
\end{itemize}
\end{definition}
\begin{remark}
    An example of smooth mollifier that satisfies the assumptions of Definition \ref{def:kernel} is the Gaussian kernel with standard deviation $\sigma\in \mathbb R_+$,
    \begin{equation*}
        G(x,\sigma):=\frac{1}{\sqrt{2\pi\sigma^2}}\exp\left(-\frac{x^2}{2\sigma^2}\right).
    \end{equation*}
\end{remark}

Before proving the well-posedness of the system \eqref{eq:killed_SDE_measure},\eqref{eq:def_killing_measure}, we first outline some regularity properties for the advection and reaction terms \eqref{eq:def_b_PDE}.

\begin{proposition}\label{prop:continuity_boundedness_b}
   Let $D$ be a compact subset of $\mathbb{R}^+_0$.  For any $x,x',y,y'\in \mathbb{R}^+_0$ with $y,y'\in D$,   the function  $b$   in \eqref{eq:def_b_PDE} satisfies the following continuity property
    \begin{equation*}
        \Big|b(x,y) - b(x',y')\Big|\leq C\Big( |x-x'|+ |y-y'| \Big),
    \end{equation*}
    with $C=C\left(\varphi_0,\varphi_1,\lambda,c_0,M_K'\right)$. Moreover, $b$ is bounded, i.e.  there exists a constant $M_b\in\mathbb{R}_{+}$ such that
    \begin{equation}\label{eq:bound_drift}
        \Big|b(x,y)\Big|\leq M_b.
    \end{equation}
\end{proposition}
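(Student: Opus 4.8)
The plan is to exploit the product structure of $b$. I would first write
\[
b(x,y) = -\varphi_1 \lambda c_0 \, y \, g(x), \qquad g(x) := \frac{e^{-\lambda x}}{\varphi_0 + \varphi_1 c_0 e^{-\lambda x}},
\]
so that the dependence on $y$ is linear and all the nonlinearity is concentrated in the scalar function $g$. Abbreviating the denominator by $D(x) := \varphi_0 + \varphi_1 c_0 e^{-\lambda x}$, the parameter hypothesis stated just before the proposition guarantees $0 < m \leq D(x) \leq M$ for every $x \in \mathbb{R}^+_0$, while the restriction $x \geq 0$ gives $e^{-\lambda x} \leq 1$. These two facts drive every estimate, so I would record them explicitly at the outset; the sign restriction $x \geq 0$ (natural here since $x$ plays the role of the nonnegative time integral $\rho(\cdot,x)(t)$) is essential, as for $x<0$ the factor $e^{-\lambda x}$ would blow up.

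For the boundedness claim I would simply estimate $|b(x,y)| = |\varphi_1| \lambda c_0\, |y|\, e^{-\lambda x}/D(x) \leq |\varphi_1| \lambda c_0\, (\sup D)/m$, using $|y| \leq \sup D < \infty$ on the compact set $D$, together with $e^{-\lambda x}\leq 1$ and $D(x)\geq m$. Setting $M_b := |\varphi_1| \lambda c_0 (\sup D)/m$ yields \eqref{eq:bound_drift}. In the intended application $y$ stands for a time integral of $\nabla K * \nu_r$, whose size is controlled through \eqref{eq:K_MK} by $M_K'$, which is why the constant is recorded as depending on $M_K'$.

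For the Lipschitz-type estimate I would split the increment in the standard way,
\[
b(x,y) - b(x',y') = \big(b(x,y)-b(x',y)\big) + \big(b(x',y)-b(x',y')\big),
\]
and treat the two terms separately. The $y$-increment is immediate: $|b(x',y)-b(x',y')| = |\varphi_1|\lambda c_0\, g(x')\,|y-y'| \leq (|\varphi_1|\lambda c_0/m)\,|y-y'|$, using $g(x')\leq 1/m$. For the $x$-increment the key computation is the derivative of $g$: differentiating, the $\varphi_1 c_0$ terms cancel and leave $g'(x) = -\lambda \varphi_0\, e^{-\lambda x}/D(x)^2$, whence $|g'(x)| \leq \lambda \varphi_0/m^2$ for $x \geq 0$. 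The mean value theorem then gives $|g(x)-g(x')| \leq (\lambda\varphi_0/m^2)\,|x-x'|$, and multiplying by $|\varphi_1|\lambda c_0\, |y| \leq |\varphi_1|\lambda c_0 \sup D$ controls this term. Collecting the two bounds and taking $C$ to be the larger of the two coefficients produces the claimed inequality with $C = C(\varphi_0,\varphi_1,\lambda,c_0,M_K')$.

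There is no serious obstacle: the only point requiring genuine care is the computation of $g'$ and the verification that the denominator stays uniformly bounded away from zero. The cancellation in $g'$ — which removes the $x$-dependent growth one might otherwise fear — is precisely what makes $g$ Lipschitz on $\mathbb{R}^+_0$ with a constant depending only on $\varphi_0$, $\lambda$ and the lower bound $m$, and the uniform positivity $D(x) \geq m$ is exactly the content of the parameter hypothesis, used repeatedly throughout.
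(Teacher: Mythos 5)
Your argument is correct and complete. Note that the paper itself does not prove this proposition; it only cites Propositions 3.1 and 3.2 of \cite{2024_MTU_arxiv}, so your write-up supplies the elementary computation that the paper defers elsewhere. The two key points — the uniform lower bound $\varphi_0+\varphi_1 c_0 e^{-\lambda x}\geq m$ coming from the standing parameter hypothesis, and the cancellation in $g'(x)=-\lambda\varphi_0 e^{-\lambda x}/(\varphi_0+\varphi_1 c_0 e^{-\lambda x})^2$ that makes $g$ globally Lipschitz on $\mathbb{R}^+_0$ — are exactly the right ones, and the splitting of the increment into an $x$-part and a $y$-part is the standard route. Two presentational remarks: you use the symbol $D$ both for the compact set containing $y,y'$ and for the denominator $D(x)$, which you should disambiguate (write, say, $R:=\sup_{y\in D}|y|$); and you should state explicitly that both $M_b$ and $C$ depend on the compact set $D$ through $R$, which is harmless here because in the intended application $y=\int_0^s\nabla K*\nu_r\,dr\in[0,M_K'T]$, so $R\leq M_K'T$ — this is precisely why the proposition records the dependence of $C$ on $M_K'$, as you correctly observe.
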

\begin{proof}
   See \cite{2024_MTU_arxiv}, Proposition 3.1 and Proposition 3.2.
\end{proof}
 
\begin{proposition} \label{prop:lipschitz_prop_b}
    Let $\rho(t,x)$ be the solution of \eqref{eq:PDE_rho_regularised},\eqref{eq:def_b_PDE} with initial condition satisfying \eqref{eq:rho_prob_density}. The drift $b$ in \eqref{eq:def_b_PDE} is Lipschitz continuous. Precisely, for any $z,z'\in\mathbb{R}\cup\{\Delta\}$ and $t\in[0,T]$,
    \begin{equation*}
        \begin{aligned}
            &\Big| b \Big(K*\rho(\cdot,x)(t), \nabla K*\rho(\cdot,x)(t)\Big) - b \Big(K*\rho(\cdot,y)(t), \nabla K*\rho(\cdot,y)(t)\Big) \Big|\\
            &\leq 2C\left(L_K\vee L_K^\prime\right)T |x - y|.
        \end{aligned}
    \end{equation*}
\end{proposition}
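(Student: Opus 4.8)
The plan is to read the left-hand side as a composition: the spatial variable $x$ is first mapped to the pair $\bigl(K*\rho(\cdot,x)(t),\,\nabla K*\rho(\cdot,x)(t)\bigr)$, and then $b$ is applied. Since $b$ is jointly Lipschitz by Proposition \ref{prop:continuity_boundedness_b}, it suffices to show that each of the two inner maps is Lipschitz in $x$ with a constant proportional to $T$, and then to chain the estimates. Concretely, applying Proposition \ref{prop:continuity_boundedness_b} with the pair of first arguments $K*\rho(\cdot,x)(t),\,K*\rho(\cdot,y)(t)$ and the pair of second arguments $\nabla K*\rho(\cdot,x)(t),\,\nabla K*\rho(\cdot,y)(t)$ bounds the left-hand side by
\[
C\Bigl( \bigl|K*\rho(\cdot,x)(t) - K*\rho(\cdot,y)(t)\bigr| + \bigl|\nabla K*\rho(\cdot,x)(t) - \nabla K*\rho(\cdot,y)(t)\bigr| \Bigr).
\]

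Next I would estimate the two convolution differences. Using the definition \eqref{eq:def_rho_integral_and_convolution},
\[
K*\rho(\cdot,x)(t) - K*\rho(\cdot,y)(t) = \int_0^t\!\!\int_{\mathbb R} \bigl(K(x-w)-K(y-w)\bigr)\rho(w,s)\,dw\,ds,
\]
so that the Lipschitz bound \eqref{eq:K_LK}, giving $|K(x-w)-K(y-w)|\le L_K|x-y|$, together with the sub-probability normalisation $\int_{\mathbb R}\rho(w,s)\,dw\le 1$ from \eqref{eq:rho_prob_density} and $t\le T$, yields $\bigl|K*\rho(\cdot,x)(t)-K*\rho(\cdot,y)(t)\bigr|\le L_K T\,|x-y|$. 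The identical computation with \eqref{eq:nablaK_LprimeK} in place of \eqref{eq:K_LK} gives $\bigl|\nabla K*\rho(\cdot,x)(t)-\nabla K*\rho(\cdot,y)(t)\bigr|\le L_K' T\,|x-y|$. Substituting both into the display above and bounding $L_K+L_K'\le 2\,(L_K\vee L_K')$ produces the claimed inequality $2C(L_K\vee L_K')T|x-y|$.

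The one point that needs care before invoking Proposition \ref{prop:continuity_boundedness_b} is that its estimate is stated for arguments in $\mathbb{R}^+_0$ with the second coordinate in a compact set $D$, so I must check that the ranges of the inner maps lie in that domain. Since $K\ge 0$ and $\rho\ge 0$, the first argument $K*\rho(\cdot,x)(t)$ is non-negative, and by $|K|\le M_K$ from \eqref{eq:K_MK} together with $\int\rho\le 1$ it is bounded by $M_K T$; the second argument is bounded in modulus by $M_K'T$ via $|\nabla K|\le M_K'$, so it ranges in the compact set $[-M_K'T,\,M_K'T]$. The sign of $\nabla K$ is immaterial here because $b$ is affine in its second variable, so the Lipschitz estimate in $|y-y'|$ holds for negative values as well once $|y|,|y'|$ are controlled; this is exactly the role played by the compactness of $D$. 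I expect this domain verification, rather than the chain of inequalities, to be the only genuinely delicate step. Finally, the extension to the cemetery state is handled by the convention $f(\Delta)=0$, under which the convolutions vanish and $b(0,0)=0$, reducing that case to the bounds already obtained.
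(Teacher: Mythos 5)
Your proposal is correct and follows essentially the same route as the paper: first invoke the Lipschitz bound of Proposition \ref{prop:continuity_boundedness_b}, then control the two convolution differences via \eqref{eq:K_LK}, \eqref{eq:nablaK_LprimeK} together with $\int_{\mathbb R}\rho(s,\cdot)\le 1$ and $t\le T$. Your explicit domain check (in particular noting that $\nabla K*\rho(\cdot,x)(t)$ need only lie in $[-M_K'T,M_K'T]$ rather than being non-negative) is a point the paper glosses over in \eqref{eq:boundness_convolution}, and is a welcome refinement rather than a deviation.
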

\begin{proof}
    Due to \eqref{eq:K_MK} and \eqref{eq:rho_prob_density}, and since the system \eqref{eq:PDE_rho_regularised},\eqref{eq:def_b_PDE} is not conservative, we get the following  bounds for the convoluted density \eqref{eq:def_rho_integral_and_convolution}. For any $(t,x) \in [0,T]\times \mathbb R$,
    \begin{equation}\label{eq:boundness_convolution}
        \begin{aligned}
            &0\leq K*\rho(\cdot,x)(t) \leq M_K T,\quad\\
            &0\leq \nabla K*\rho(\cdot,x)(t) \leq M_K^\prime T.
        \end{aligned}
    \end{equation}
    
    Therefore, by \eqref{eq:K_LK}, \eqref{eq:nablaK_LprimeK}, and Proposition \ref{prop:continuity_boundedness_b}, 
    \begin{align*}
        &\Big| b \Big(K*\rho(\cdot,x)(t), \nabla K*\rho(\cdot,x)(t) \Big) - b \Big(K*\rho(\cdot,y)(t), \nabla K*\rho(\cdot,y)(t)\Big) \Big|&&\\
        &\leq C\Big|K*\rho(\cdot,x)(t) - K*\rho(\cdot,y)(t) \Big| + C\Big|\nabla K*\rho(\cdot,x)(t) - \nabla K*\rho(\cdot,y)(t)\Big|&&\\
        &\leq C\int_0^t\int_{\mathbb{R}}\Big|K(x-z)-K(y-z)\Big|\rho(s,z)dzds + C\int_0^t\int_{\mathbb{R}}\Big|\nabla K(x-z)-\nabla K(y-z)\Big|\rho(s,z)dzds&&\\
        &\leq 2C\left(L_K\vee L_K^\prime\right)T|x-y|.
    \end{align*}
\end{proof}

Let us note that if we consider the following equation  $$
\partial_t c = -\lambda   (K*\rho) c,
$$
it admits the following explicit solution in terms of $\rho$
  \begin{equation}\label{eq:c_explicit_regularised}
        c(t,x) \,=\, c_0\exp\left(-\lambda\int_0^t \int_{\mathbb{R}}K(x-y)\rho(s,y)dyds \right),
    \end{equation}
where $c_0$ is its constant initial condition. Hence, de facto, the path dependence of both the advection and reaction term hides the explicit coupling  of the evolution of $\rho$  with $c$ \cite{2024_MTU_arxiv}. However, the coupling is done with a quantity which is regular enough. Precisely, the following result holds.

\begin{proposition}\label{prop:boundedness_continuity_reaction_coeff}
    The function $c$ given by \eqref{eq:c_explicit_regularised} is bounded by  $c_0$ and Lipschitz continuous. Precisely, for any $x,z\in\mathbb{R}\cup\{\Delta\}$ and $t\in[0,T]$,
    \begin{equation*}
        \Big| c(t,x) - c(t,z) \Big| \,\leq\, \lambda c_0L_KT\big|x-z\big|.
    \end{equation*}
\end{proposition}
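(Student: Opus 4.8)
The plan is to treat the two claims separately, both as elementary estimates on the explicit representation \eqref{eq:c_explicit_regularised}. For the boundedness, I would first observe that $K$ takes values in $\mathbb{R}^+$ by Definition \ref{def:kernel} and that $\rho$ is a (sub-)probability density, hence nonnegative. Consequently the double integral $\int_0^t\int_{\mathbb{R}} K(x-y)\rho(s,y)\,dy\,ds$ is nonnegative, the exponent in \eqref{eq:c_explicit_regularised} is nonpositive, and therefore $0 \leq c(t,x) \leq c_0$ follows at once.

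For the Lipschitz estimate, writing $A(t,x) := K*\rho(\cdot,x)(t) \geq 0$, I would exploit that the map $a \mapsto e^{-\lambda a}$ is Lipschitz on $[0,\infty)$ with constant $\lambda$ (its derivative $-\lambda e^{-\lambda a}$ has modulus at most $\lambda$ there), so that by the mean value theorem
\[
|c(t,x) - c(t,z)| = c_0\big|e^{-\lambda A(t,x)} - e^{-\lambda A(t,z)}\big| \leq c_0\lambda\,|A(t,x) - A(t,z)|.
\]
Next I would control the difference of the convolved densities by moving the absolute value inside the integral and invoking the Lipschitz property \eqref{eq:K_LK} of $K$, which gives $|K(x-y)-K(z-y)| \leq L_K|x-z|$ uniformly in $y$:
\[
|A(t,x) - A(t,z)| \leq \int_0^t\int_{\mathbb{R}}|K(x-y) - K(z-y)|\,\rho(s,y)\,dy\,ds \leq L_K|x-z|\int_0^t\int_{\mathbb{R}}\rho(s,y)\,dy\,ds.
\]
Using the non-conservation bound \eqref{eq:rho_prob_density}, namely $\int_{\mathbb{R}}\rho(s,y)\,dy \leq 1$, the inner spatial integral is at most $1$, so the time integral is bounded by $t \leq T$; chaining the two displays then yields $|c(t,x) - c(t,z)| \leq \lambda c_0 L_K T\,|x-z|$, exactly as stated.

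The case in which one of the arguments is the cemetery state $\{\Delta\}$ is handled by the convention $c(t,\Delta)=0$ together with the isolation of $\{\Delta\}$ from $\mathbb{R}$, so that the asserted inequality holds trivially. I do not expect any genuine obstacle in this argument: the only two points requiring care are the sign of the exponent (which rests on $K\geq 0$ and $\rho\geq 0$) and the fact that the Lipschitz constant of the exponential restricted to the relevant half-line is precisely $\lambda$, both of which are immediate consequences of the standing hypotheses and of Definition \ref{def:kernel}.
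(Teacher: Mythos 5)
Your proof is correct and takes essentially the same route as the paper's: nonnegativity of the exponent (from $K\geq 0$, $\rho\geq 0$) gives $0\leq c\leq c_0$, and the Lipschitz bound follows from the Lipschitz property \eqref{eq:K_LK} of $K$ together with $\int_{\mathbb{R}}\rho(s,y)\,dy\leq 1$ and $t\leq T$. The only difference is cosmetic — the paper factors out one exponential and uses $|e^{-x}-1|\leq|x|$ for $x\geq 0$ after a case split on the sign of $K(x-y)-K(z-y)$, whereas you apply the mean value theorem to $a\mapsto e^{-\lambda a}$ on $[0,\infty)$ directly, which cleanly avoids the issue that this sign may depend on $y$.
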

\begin{proof}
From \eqref{eq:c_explicit_regularised} and \eqref{eq:boundness_convolution}, the boundedness is trivial. Now, let us assume $K(x-y) - K(z-y)\geq 0$. For the case $K(x-y) - K(z-y)< 0$, the same argument holds. Then, since $\Big|e^{-x}-1\Big|\leq|x|$ for every $x\geq 0$,
\begin{equation*}
    \begin{split}
        \left| c(t,x) - c(t,z) \right|& =\,c_0\exp\left(-\lambda\int_0^t \int_{\mathbb{R}}K(z-y)\rho(s,y)dyds \right)\cdot\\
        &\qquad\cdot\Bigg|\exp\left(-\lambda\int_0^t \int_{\mathbb{R}}\Big(K(x-y)-K(z-y)\Big)\rho(s,y)dyds \right) \,-\, 1 \Bigg|\\
        &\leq\, \lambda c_0\int_0^t\int_{\mathbb{R}}\Big| K(x-y) - K(z-y) \Big|\rho(s,y)dyds\,\leq\, \lambda c_0 L_KT|x-z|,
    \end{split}
    \end{equation*}
    where the last inequality is due to \eqref{eq:K_LK}.
\end{proof}

\subsection{Well-posedness of the killed McKean-Vlasov SDE}
This section is devoted to the study of  the existence and uniqueness properties  of the McKean-Vlasov SDE \eqref{eq:killed_SDE_measure}, subject to the killing random time defined in \eqref{eq:def_killing_measure}. To this end, an auxiliary process is introduced, in which the killing mechanism appears only through the law entering the coefficients, and not in the dynamics of the process itself. As shown in Theorem  \ref{theo:killing}, the well-posedness of this auxiliary model implies that of the killed McKean-Vlasov SDE.

The aforementioned auxiliary process is defined as
\begin{equation}\label{eq:SDE_con_killing_solo_dentro}
    \begin{aligned}
        &\widetilde{X}_t = X_0 + \int_0^t b\left(\int_0^s K*\nu_r(\widetilde{X}_s)dr, \int_0^s\nabla K*\nu_r(\widetilde{X}_s)dr\right)ds + \sqrt{2} W_t;\\
        &\widetilde{\Lambda}_t = \int_0^t \lambda c_0\exp\left( -\lambda\int_0^s K*\nu_r(\widetilde{X}_s)dr\right) ds,
    \end{aligned}
\end{equation}
for any $t\in [0,T]$, where $\nu_t=\mathbb P\left( \widetilde{X}_t\in\cdot, \widetilde{\Lambda}_t<Z\right)$.

In light of equations \eqref{eq:subprob_killed_SDE} and \eqref{eq:subprob_killed_SDE_2}, equation \eqref{eq:SDE_con_killing_solo_dentro} can be reformulated by lifting it to an \(\mathbb{R}^2\)-valued process and introducing a functional \(\Phi\) that connects probability measures on \(\mathbb{R}^2\) to sub-probability measures on \(\mathbb{R}\) (see, e.g., \cite[Proposition 3.12]{hambly2023control}).

Specifically, the following map is defined
\begin{equation*}
    \Phi:\mathcal{P}^2\left(\mathbb{R}^2\right)\rightarrow \mathcal{M}_{\leq 1}^2\left(\mathbb{R}\right),\quad \int_{\mathbb{R}}f(x)\Phi({\mu})(dx):=\int_{\mathbb{R}^2} e^{-y}f(x){\mu}(dx,dy),
\end{equation*}
for every bounded continuous test function \(f\), where \(\mathcal{P}^2(\mathbb{R}^2)\) denotes the set of square-integrable probability measures on \(\mathbb{R}^2\), and \(\mathcal{M}_{\leq 1}^2(\mathbb{R})\) the set of square-integrable sub-probability measures on \(\mathbb{R}\).

The process $(\widetilde{X},\widetilde{\Lambda})$ solution to \eqref{eq:SDE_con_killing_solo_dentro} can be rewritten through the map $\Phi$ as
\begin{equation}\label{eq:SDE_con_killing_solo_dentro_lift}
    \begin{aligned}
        &\widetilde{X}_t = X_0 + \int_0^t b\left(\int_0^s K*\Phi(\widetilde{\mu}_r)(\widetilde{X}_s)dr, \int_0^s\nabla K*\Phi(\widetilde{\mu}_r)(\widetilde{X}_s)dr\right)ds + \sqrt{2} W_t;\\
        &\widetilde{\Lambda}_t = \int_0^t \lambda c_0\exp\left( -\lambda \int_0^s K*\Phi(\widetilde{\mu}_r)(\widetilde{X}_s)dr \right)ds,
    \end{aligned}
\end{equation}
for all \(t \in [0,T]\), where \(\widetilde{\mu}\) denotes the law of $(\widetilde{X},\widetilde{\Lambda})$.

The previous reformulation allows working with genuine probability measures on $\mathbb{R}^2$, instead of non-conservative measures on $\mathbb{R}$, thus simplifying the analysis. The well-posedness of equation \eqref{eq:SDE_con_killing_solo_dentro_lift} is addressed in the following.

We will denote with $C^d$ ($C$ for the one-dimensional case) the space of all the real continuous functions from $[0,T]$ to $\mathbb{R}^d$, namely $C\left( [0,T];\mathbb{R}^d\right)$. Let $\mathcal{P}^2\left(C^d\right)$ be the set of probability measures on $C^d$ with finite second moment.  We introduce the following Wasserstein distances.
\begin{definition}\label{def:Wasserstein}
Let $t\in[0,T]$. For any $\mu,\nu\in\mathcal{P}^2\left(C^d\right)$  the \emph{Wasserstein distance} $D^2_t$ between $\mu$ and $\nu$  is defined as
    \begin{equation*}
        D^2_t(\mu,\nu) \,:=\, \inf_{\pi\in\Pi(\mu,\nu)} \int_{C^d\times C^d} \sup_{0\leq s\leq t} \lVert x_s - y_s\rVert^2 \pi\left(dx,dy\right)\,,
    \end{equation*}
    where $\lVert\cdot\rVert$ denotes the Euclidean norm on $\mathbb{R}^d$ and $\Pi(\mu,\nu)$ is the space of coupling of $\mu$ and $\nu$.
\end{definition}

\begin{lemma}\label{lemma:Wasserstein_properties}
    Let $\mu,\nu\in \mathcal{P}^2\left(C^d\right)$. For any $t\leq T$
    \begin{equation*}
        D_t^2(\mu,\nu) \,\leq\, D^2_T(\mu,\nu).
    \end{equation*}
    Also, let $Y^{\mu}$ and $Y^{\nu}$ be two stochastic processes such that $Law\left(Y^{\mu}\right)=\mu$ and $Law\left(Y^{\nu}\right)=\nu$. Then,
    \begin{equation}\label{eq:prop_Wasserstein}
        D_t^2(\mu,\nu) \leq \mathbb{E}\left[ \sup_{0\leq s\leq t} \big\lVert Y_s^{\mu} \,-\, Y_s^{\nu} \big\rVert^2 \right]\,.
    \end{equation}
\end{lemma}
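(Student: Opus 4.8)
The plan is to prove both claims directly from the definition of $D_t^2$ as an infimum of a path-cost over couplings, exploiting the monotonicity of the supremum functional in the time horizon together with the elementary fact that any explicit coupling furnishes an upper bound for the infimum.

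For the first inequality, I would fix an arbitrary coupling $\pi \in \Pi(\mu, \nu)$ and note the pointwise bound $\sup_{0 \leq s \leq t} \lVert x_s - y_s\rVert^2 \leq \sup_{0 \leq s \leq T} \lVert x_s - y_s\rVert^2$, valid for every pair of paths $(x,y) \in C^d \times C^d$ whenever $t \leq T$, simply because the supremum over a larger interval dominates that over a smaller one. Integrating against $\pi$ preserves the inequality, so that $\int \sup_{0 \leq s \leq t} \lVert x_s - y_s\rVert^2 \, \pi(dx,dy) \leq \int \sup_{0 \leq s \leq T} \lVert x_s - y_s\rVert^2 \, \pi(dx,dy)$ for each $\pi$. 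Taking the infimum over $\pi \in \Pi(\mu, \nu)$ on the right-hand side, noting that the very same coupling set $\Pi(\mu,\nu)$ governs both distances, yields $D_t^2(\mu, \nu) \leq D_T^2(\mu, \nu)$.

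For the second inequality, the key remark is that the joint law $\pi^{\star} := \mathrm{Law}(Y^{\mu}, Y^{\nu})$ on $C^d \times C^d$ is itself an admissible coupling of $\mu$ and $\nu$: its marginals are $\mu$ and $\nu$ by hypothesis, and the very appearance of $\mathbb{E}[\sup_s \lVert Y_s^{\mu} - Y_s^{\nu}\rVert^2]$ in the statement presupposes that the two processes are realised on a common probability space, whence $\pi^{\star} \in \Pi(\mu, \nu)$. Since $D_t^2$ is an infimum over $\Pi(\mu, \nu)$, evaluating the path-cost at this particular coupling produces an upper bound; the pushforward (change-of-variables) identity $\int_{C^d \times C^d} \sup_{0 \leq s \leq t} \lVert x_s - y_s\rVert^2 \, \pi^{\star}(dx,dy) = \mathbb{E}[\sup_{0 \leq s \leq t} \lVert Y_s^{\mu} - Y_s^{\nu}\rVert^2]$ then delivers exactly \eqref{eq:prop_Wasserstein}.

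I do not anticipate a genuine obstacle, as both assertions are structural consequences of the definition of $D_t^2$. The only points deserving a line of care are the measurability of the map $(x,y) \mapsto \sup_{0 \leq s \leq t} \lVert x_s - y_s\rVert^2$ on $C^d \times C^d$, which follows from continuity of the paths (allowing the supremum to be realised over a countable dense set of times), and the verification that $\pi^{\star}$ has the correct marginals; both are routine.
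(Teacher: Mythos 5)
Your proof is correct. The paper does not actually prove this lemma---it simply cites Chapter 6 of Villani's book---and your argument is precisely the standard one that reference contains: monotonicity of the path-cost in the time horizon for the first inequality, and using the joint law of $(Y^{\mu},Y^{\nu})$ as an admissible (generally suboptimal) coupling for the second. Your side remarks on measurability and on the marginals of $\pi^{\star}$ are the right points to check, and both are handled adequately.
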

\begin{proof}
    For a complete proof of the results, see e.g. Chapter 6 in \cite{Villani}.
\end{proof}

\begin{proposition}\label{prop:well_posed_SDE}
    The McKean-Vlasov diffusion model \eqref{eq:SDE_con_killing_solo_dentro_lift} admits a strong solution $(\widetilde{X},\widetilde{\Lambda})$ which is pathwise unique. Moreover, it also admits a weak solution which is unique in the sense of probability law.
\end{proposition}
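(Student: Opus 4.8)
The plan is to establish well-posedness of \eqref{eq:SDE_con_killing_solo_dentro_lift} by a fixed-point argument in the space $\mathcal{P}^2(C^2)$ of square-integrable probability measures on the path space $C^2=C([0,T];\mathbb{R}^2)$, endowed with the Wasserstein metric $D_T$ of Definition \ref{def:Wasserstein}, which makes it a complete metric space. For a frozen candidate law $\widetilde{\mu}\in\mathcal{P}^2(C^2)$ with time-marginals $\widetilde{\mu}_r$, set
\begin{equation*}
    \beta^{\widetilde{\mu}}(s,x):=b\!\left(\int_0^s K*\Phi(\widetilde{\mu}_r)(x)\,dr,\ \int_0^s \nabla K*\Phi(\widetilde{\mu}_r)(x)\,dr\right).
\end{equation*}
The first line of \eqref{eq:SDE_con_killing_solo_dentro_lift} then becomes the standard Itô SDE $\widetilde{X}_t = X_0 + \int_0^t \beta^{\widetilde{\mu}}(s,\widetilde{X}_s)\,ds + \sqrt{2}\,W_t$, while $\widetilde{\Lambda}$ is recovered as a deterministic time integral along the resulting trajectory. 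A solution of the McKean--Vlasov system is precisely a fixed point of the map $\mathcal{T}:\widetilde{\mu}\mapsto \mathrm{Law}(\widetilde{X},\widetilde{\Lambda})$.

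First I would check that $\mathcal{T}$ is well defined and maps $\mathcal{P}^2(C^2)$ into itself. Since $\Phi(\widetilde{\mu}_r)$ is a sub-probability measure, $\Phi(\widetilde{\mu}_r)(\mathbb{R})\le 1$, and properties \eqref{eq:K_MK}, \eqref{eq:K_LK}, \eqref{eq:nablaK_LprimeK} yield, exactly as in Proposition \ref{prop:lipschitz_prop_b},
\begin{equation*}
    \left|\int_0^s K*\Phi(\widetilde{\mu}_r)(x)\,dr - \int_0^s K*\Phi(\widetilde{\mu}_r)(x')\,dr\right| \le L_K T\,|x-x'|,
\end{equation*}
and analogously for $\nabla K$ with $L_K'$. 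Combined with the Lipschitz continuity and boundedness of $b$ from Proposition \ref{prop:continuity_boundedness_b}, this shows $\beta^{\widetilde{\mu}}$ is bounded by $M_b$ and Lipschitz in $x$ uniformly in $s$. Classical existence--uniqueness for SDEs with Lipschitz bounded drift and constant diffusion then gives a pathwise unique strong solution $\widetilde{X}$; the bound on $\beta^{\widetilde{\mu}}$ together with $\mathbb{E}[|X_0|^2]<\infty$ gives $\mathbb{E}[\sup_{t\le T}|\widetilde{X}_t|^2]<\infty$, and $0\le\widetilde{\Lambda}_t\le\lambda c_0 T$, so indeed $\mathcal{T}(\widetilde{\mu})\in\mathcal{P}^2(C^2)$.

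The heart of the argument is the contraction estimate. Given $\mu,\nu\in\mathcal{P}^2(C^2)$, I would drive the two frozen SDEs with the same $W$ and $X_0$, obtaining $(X^\mu,\Lambda^\mu)$ and $(X^\nu,\Lambda^\nu)$; the Brownian parts cancel in the difference, so Cauchy--Schwarz gives $\sup_{u\le t}|X_u^\mu-X_u^\nu|^2\le t\int_0^t|\beta^\mu(s,X_s^\mu)-\beta^\nu(s,X_s^\nu)|^2\,ds$, and the analogous bound holds for the $\Lambda$ component after using $|e^{-a}-e^{-b}|\le|a-b|$ for $a,b\ge0$. Splitting each integrand into a state part $\beta^\mu(s,X_s^\mu)-\beta^\mu(s,X_s^\nu)$, controlled by the Lipschitz bound above, and a measure part $\beta^\mu(s,X_s^\nu)-\beta^\nu(s,X_s^\nu)$, I would estimate the latter through the definition of $\Phi$: for any coupling of $\mu,\nu$ represented by processes $(Y^\mu,\Xi^\mu),(Y^\nu,\Xi^\nu)$,
\begin{equation*}
    \left|K*\Phi(\mu_r)(x)-K*\Phi(\nu_r)(x)\right| \le \mathbb{E}\!\left[M_K\,|\Xi_r^\mu-\Xi_r^\nu|+L_K\,|Y_r^\mu-Y_r^\nu|\right],
\end{equation*}
where $|e^{-\Xi^\mu}-e^{-\Xi^\nu}|\le|\Xi^\mu-\Xi^\nu|$ again handles the exponential weight. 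Taking the infimum over couplings and using Cauchy--Schwarz bounds the right-hand side by $(M_K\vee L_K)\,D_r(\mu,\nu)$, and integrating in $r$ produces a factor $\int_0^s D_r(\mu,\nu)\,dr$. Collecting terms and taking expectations, I would reach an inequality of the form $g(t)\le A\int_0^t g(s)\,ds+B\int_0^t D_s^2(\mu,\nu)\,ds$ with $g(t):=\mathbb{E}[\sup_{u\le t}\|(X_u^\mu,\Lambda_u^\mu)-(X_u^\nu,\Lambda_u^\nu)\|^2]$; Grönwall's lemma and \eqref{eq:prop_Wasserstein} then give $D_t^2(\mathcal{T}(\mu),\mathcal{T}(\nu))\le \widetilde{C}\int_0^t D_s^2(\mu,\nu)\,ds$.

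Iterating this inequality yields $D_T^2(\mathcal{T}^n(\mu),\mathcal{T}^n(\nu))\le \big((\widetilde{C}T)^n/n!\big)\,D_T^2(\mu,\nu)$, so $\mathcal{T}^n$ is a strict contraction for $n$ large; by the generalized Banach fixed-point theorem, $\mathcal{T}$ admits a unique fixed point $\widetilde{\mu}^\ast$. Solving the frozen SDE with $\widetilde{\mu}=\widetilde{\mu}^\ast$ produces the strong solution $(\widetilde{X},\widetilde{\Lambda})$ of \eqref{eq:SDE_con_killing_solo_dentro_lift}, and pathwise uniqueness follows because any two solutions have laws that are fixed points of $\mathcal{T}$, hence equal, after which pathwise uniqueness of the frozen SDE forces the trajectories to coincide. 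Weak existence is then immediate, and uniqueness in law follows from pathwise uniqueness together with weak existence via the Yamada--Watanabe theorem. I expect the main obstacle to be the measure part of the contraction estimate: the path-dependence introduces the nested time integral $\int_0^s(\cdots)\,dr$, so the bound naturally produces $\int_0^t D_s^2(\mu,\nu)\,ds$ rather than a pointwise $D_t^2$, and the lifted $\mathbb{R}^2$ structure requires carefully tracking how the $e^{-y}$ weight in $\Phi$ couples the $\widetilde{\Lambda}$-component into the drift of $\widetilde{X}$.
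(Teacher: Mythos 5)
Your proposal is correct and follows essentially the same route as the paper: your map $\mathcal{T}$ is exactly the paper's $\Theta$, and the paper likewise derives the Wasserstein--Gr\"onwall inequality $D_t^2(\widetilde{\mu},\widetilde{\mu}')\le C_T\int_0^t D_s^2(\widetilde{\mu},\widetilde{\mu}')\,ds$ via the same state/measure splitting and coupling estimate for the $e^{-y}$-weighted kernel, iterates it to obtain the $C_T^kT^k/k!$ factor, and concludes with Yamada--Watanabe. The only difference is organizational: the paper first compares two solutions of the full McKean--Vlasov system directly to get pathwise uniqueness and then runs the Picard iteration for weak existence, whereas you derive everything from the contraction property of the frozen map; the substance of the estimates is identical.
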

\begin{proof}
    Let $(\widetilde{\mathbf{X}},\widetilde{\mu}):=\left((\widetilde{X},\widetilde{\Lambda}),\widetilde{\mu}\right)$ and $(\widetilde{\mathbf{X}}^{\prime},\widetilde{\mu}^{\prime}):=\left((\widetilde{X}^\prime,\widetilde{\Lambda}^\prime),\widetilde{\mu}^\prime\right)$ be two solutions to equation \eqref{eq:SDE_con_killing_solo_dentro_lift}. By assumption \eqref{eq:K_MK} in the definition of the smooth kernel (Definition~\ref{def:kernel}) and
    \[
    \int_{\mathbb{R}} \Phi(\widetilde{\mu}_u)dx \leq 1, \quad 
    \int_{\mathbb{R}} \Phi(\widetilde{\mu}^{\prime}_u)dx \leq 1 
    \quad \text{for all } u \in [0,T],
    \]
    we obtain the following bounds, valid for every \(s \in [0,T]\):
    \begin{equation}\label{eq:boundedness_3}
        \int_0^s K*\Phi(\widetilde{\mu}_u)(\widetilde{X}_s)\,du,\quad
        \int_0^s K*\Phi(\mu^\prime_u)(\widetilde{X}^\prime_s)\,du \leq M_K T,
    \end{equation}
    and
    \begin{equation}\label{eq:boundedness_4}
        \int_0^s \nabla K*\Phi(\widetilde{\mu}_u)(\widetilde{X}_s)\,du,\quad
        \int_0^s \nabla K*\Phi(\mu^\prime_u)(\widetilde{X}^\prime_s)\,du \leq M_K^\prime T.
    \end{equation}

    Now,
    \begin{equation*}
        \mathbb{E}\left[ \sup_{0\leq a\leq t} \big\lVert \widetilde{\mathbf{X}}_a \,-\, \widetilde{\mathbf{X}}^\prime_a \big\rVert^2 \right] \leq A + B,
    \end{equation*}
    where
    \begin{equation*}
        \begin{aligned}
            A := \mathbb{E}\Bigg[ \sup_{0\leq a\leq t} a \int_0^a \Bigg|&b\left(\int_0^s K*\Phi(\widetilde{\mu}_u)(\widetilde{X}_s)du,\int_0^s \nabla K*\Phi(\widetilde{\mu}_u)(\widetilde{X}_s)du\right)\\
            &-\, b\left(\int_0^sK*\Phi(\mu^\prime_u)(\widetilde{X}^\prime_s)du,\int_0^s\nabla K*\Phi(\mu^\prime_u)(\widetilde{X}^\prime_s)du\right) \Bigg|^2\,ds \Bigg]
        \end{aligned}
    \end{equation*}
    and
    \begin{equation*}
        \begin{aligned}
            B := \lambda c_0\mathbb{E}\Bigg[ \sup_{0\leq a\leq t} a \int_0^a \Bigg|&\exp\left(-\lambda\int_0^s K*\Phi(\widetilde{\mu}_u)(\widetilde{X}_s)du\right)\\
            &- \exp\left(-\lambda\int_0^s K*\Phi(\widetilde{\mu}^\prime_u)(\widetilde{X}^\prime_s)du\right)\Bigg|^2 ds\Bigg].
        \end{aligned}
    \end{equation*}

    As for the first term, from \eqref{eq:boundedness_3} and \eqref{eq:boundedness_4}, by Proposition \ref{prop:continuity_boundedness_b}, and Cauchy-Schwartz inequality,
    \begin{align*}
        A &= \mathbb{E}\Bigg[ t \int_0^t \Bigg| b\left(\int_0^s K*\Phi(\widetilde{\mu}_u)(\widetilde{X}_s)du,\int_0^s \nabla K*\Phi(\widetilde{\mu}_u)(\widetilde{X}_s)du\right) \\
        &\qquad\qquad\quad - b\left(\int_0^sK*\Phi(\mu^\prime_u)(\widetilde{X}^\prime_s)du,\int_0^s\nabla K*\Phi(\mu^\prime_u)(\widetilde{X}^\prime_s)du\right) \Bigg|^2\,ds \Bigg] \\
        &\leq 2CT\mathbb{E}\left[\int_0^t \int_0^s \Bigg| \int_{\mathbb{R}} K(x-\widetilde{X}_s)\Phi(\widetilde{\mu}_u)(dx) - \int_{\mathbb{R}} K(x^\prime-\widetilde{X}^\prime_s)\Phi(\widetilde{\mu}^\prime_u)(dx^\prime) \Bigg|^2\,dud s\right] \\
        &\quad + 2CT\,\mathbb{E}\left[\int_0^t \int_0^s \Bigg| \int_{\mathbb{R}} \nabla K(x-\widetilde{X}_s)\Phi(\widetilde{\mu}_u)(dx) - \int_{\mathbb{R}} \nabla K(x^\prime-\widetilde{X}^\prime_s)\Phi(\widetilde{\mu}^\prime_u)(dx^\prime) \Bigg|^2\,dud s\right] \\
        &=: A_1 + A_2\,.
    \end{align*}

    Let $\pi\in\Pi(\widetilde{\mu},\widetilde{\mu}^\prime)$ be a coupling of $\widetilde{\mu}$ and $\widetilde{\mu}^\prime$. By Jensen inequality, the Lipschitz continuity of $K$ and $\nabla K$, and Cauchy-Schwartz inequality, the first term $A_1$ can be bounded as follows.
    \begin{align*}
        A_1 &= 2CT\mathbb{E}\left[ \int_0^t\int_0^s \Bigg| \int_{C^2} K(x_u-\widetilde{X}_s)e^{-y_u}\widetilde{\mu}(dx,dy) -\int_{C^2} K(x_u^\prime-\widetilde{X}^\prime_s)e^{-y_u^\prime}\widetilde{\mu}^\prime(dx^\prime,dy^\prime) \Bigg|^2duds \right]\\
        &\leq 2CT\mathbb{E}\left[ \int_0^t\int_0^s\int_{C^2\times C^2} \bigg| K(x_u-\widetilde{X}_s)e^{-y_u} -K(x_u^\prime-\widetilde{X}^\prime_s)e^{-y_u^\prime} \bigg|^2\pi(d\mathbf{x},d\mathbf{x}^\prime)duds \right]\\
        &\leq 2CT\mathbb{E}\left[ \int_0^t\int_0^s\int_{C^2\times C^2} e^{-2y_u^\prime}\bigg| K(x_u-\widetilde{X}_s)\exp(|y_u-y_u^\prime|)-K(x_u^\prime-\widetilde{X}^\prime_s) \bigg|^2\pi(d\mathbf{x},d\mathbf{x}^\prime)duds \right]\\
        &\leq 2CT\mathbb{E}\left[ \int_0^t\int_0^s\int_{C^2\times C^2} \bigg| K(x_u-\widetilde{X}_s)(1+|y_u-y_u^\prime|) -K(x_u^\prime-\widetilde{X}^\prime_s) \bigg|^2\pi(d\mathbf{x},d\mathbf{x}^\prime)duds \right]\\
        &\leq 8CT\int_0^t\int_0^s\int_{C^2\times C^2} |\mathbf{x}_u-\mathbf{x}_u^\prime|^2\pi(d\mathbf{x},d\mathbf{x}^\prime)duds + 8CT^2\mathbb{E}\left[ \int_0^t |\widetilde{X}_s-\widetilde{X}^\prime_s|^2ds \right].
    \end{align*}

    For the terms  $A_2$ and $B$, similar arguments may be applied. Therefore, by Fubini's theorem,
    \begin{equation*}
        \begin{aligned}
            &\mathbb{E}\left[ \sup_{0\leq a\leq t} \big\lVert \widetilde{\mathbf{X}}_a \,-\, \widetilde{\mathbf{X}}^\prime_a \big\rVert^2 \right]\\
            &\leq 8CT^2\int_0^t\int_{C^2\times C^2} \sup_{0\leq u\leq s}\lVert\mathbf{x}_u-\mathbf{x}_u^\prime\rVert^2\pi(d\mathbf{x},d\mathbf{x}^\prime)duds + 8CT^2\int_0^t\mathbb{E}\left[  \lVert\widetilde{\mathbf{X}}_s-\widetilde{\mathbf{X}}^\prime_s\rVert^2 \right]ds.
        \end{aligned}
    \end{equation*}
    
    Taking the infimum over $\Pi(\widetilde{\mu},\widetilde{\mu}^{\prime})$ on both sides of the above inequality, we obtain
    \begin{equation}\label{eq4:proof_well_posed_SDE}
        \mathbb{E}\left[ \sup_{0\leq a\leq t} \big\lVert \widetilde{\mathbf{X}}_a \,-\, \widetilde{\mathbf{X}}^\prime_a \big\rVert^2 \right] \leq 8CT^2\int_0^tD_s^2(\widetilde{\mu},\widetilde{\mu}^{\prime})ds + 8CT^2\int_0^t\mathbb{E}\left[  \lVert\widetilde{\mathbf{X}}_s-\widetilde{\mathbf{X}}^\prime_s\rVert^2 \right]ds.
    \end{equation}
    
    Applying Gronwall's lemma to \eqref{eq4:proof_well_posed_SDE} yields
    \begin{equation}\label{eq5:proof_well_posed_SDE}
        \mathbb{E}\left[ \sup_{0\leq a\leq t} \big\lVert \widetilde{\mathbf{X}}_a \,-\, \widetilde{\mathbf{X}}^\prime_a \big\rVert^2 \right] \leq  8CT^2e^{8CT^2t} \int_0^t D_s^2(\widetilde{\mu},\widetilde{\mu}^{\prime})ds.
    \end{equation}
    
    By \eqref{eq:prop_Wasserstein} of Lemma \ref{lemma:Wasserstein_properties},
    \begin{equation}\label{eq6:proof_well_posed_SDE}
        D^2_t(\widetilde{\mu},\widetilde{\mu}^{\prime}) \leq C_T\int_0^t D_r^2(\widetilde{\mu},\widetilde{\mu}^{\prime})dr,
    \end{equation}
    where $C_T = C_T\left(\varphi_0,\varphi_1,\lambda,c_0,M_K',T\right)$. Therefore, by Gronwall's lemma, we obtain
    \begin{equation}\label{eq7:proof_well_posed_SDE}
        D^2_T(\widetilde{\mu},\widetilde{\mu}^{\prime}) = 0.
    \end{equation}
    
    Finally, by substituting \eqref{eq7:proof_well_posed_SDE} in \eqref{eq5:proof_well_posed_SDE}, we get
    \begin{equation*}
        \mathbb{E}\left[ \sup_{0\leq a\leq t} \big\lVert \widetilde{\mathbf{X}}_a \,-\, \widetilde{\mathbf{X}}^\prime_a \big\rVert^2 \right] = 0,
    \end{equation*}
    which proves pathwise uniqueness.

    To prove the existence of a weak solution to SDE \eqref{eq:SDE_con_killing_solo_dentro_lift}, let us introduce the map
    \begin{equation*}
    \Theta:\mathcal{P}^2\left(C^2\right)\rightarrow\mathcal{P}^2\left(C^2\right)\quad \textit{s.t.}\quad \Theta(\widetilde{\mu})\,=\, \mathcal{L}(\widetilde{\mathbf{X}})\,,
        \end{equation*}
    where $\widetilde{\mathbf{X}}$ is a solution to equation \eqref{eq:SDE_con_killing_solo_dentro_lift}. As $\left(\mathcal{P}^2\left(C^2\right),D^2_T\right)$ is a complete separable metric space, it just remains to show the existence of fixed point for the map $\Theta$.
    
    By the definition of $\Theta$, we can rewrite \eqref{eq6:proof_well_posed_SDE} as
    \begin{equation}\label{eq8:proof_well_posed_SDE}
        D^2_T\left(\Theta(\widetilde{\mu}),\Theta(\widetilde{\mu}^\prime)\right) \,=\, D^2_T(\widetilde{\mu},\widetilde{\mu}^\prime) \leq C_T\int_0^T D_r^2(\widetilde{\mu},\widetilde{\mu}^\prime)dr\,.
    \end{equation}
    
    Now, let us define the sequence
    \begin{equation*}
        \{m_k\}_{k\in \mathbb{N}} \,:=\, \{\Theta^k(m)\}_{k\in\mathbb{N}},
    \end{equation*}
    where $m\in \mathcal{P}^2\left(C^2\right)$.
    
    By definition of $m_k$ and \eqref{eq8:proof_well_posed_SDE},
    \begin{equation*}
        D^2_T\left( \Theta^{k+1}(m),\Theta^k(m) \right) \,\leq\, C_T\int_0^T D_u^2\left(\Theta^k(m),\Theta^{k-1}(m)\right)du.
    \end{equation*}
    Iterating the above inequality,
    \begin{equation*}
        D_T^2\left( \Theta^{k+1}(m),\Theta^k(m) \right) \,\leq\, \frac{C_T^k T^k}{k!}\,D_T^2\left( \Theta(m),m \right).
    \end{equation*}
    Since
    \begin{equation*}
        \frac{C_T^k T^k}{k!} \,\rightarrow\, 0\quad \textit{as}\quad k\,\rightarrow\,+\infty,
    \end{equation*}
    $\{m_k\}_{k\in\mathbb{N}}$ is a Cauchy sequence. Therefore, there exists a probability measure $\mathbb{Q}$ such that $m_k$ weakly converges to  $\mathbb{Q}$ as $k\rightarrow +\infty$ and $\Theta(\mathbb{Q}) = \mathbb{Q}$, by definition of $m_k$. By construction of $\Theta$, this implies that there exists a weak solution to SDE \eqref{eq:SDE_con_killing_solo_dentro_lift}.

    Finally, by the results of Yamada and Watanabe (see   \cite[308]{Karatzas}), we can conclude that there exists a pathwise unique, strong solution to SDE \eqref{eq:SDE_con_killing_solo_dentro_lift}.

    As for the uniqueness in law, we follow the proofs in \cite[Theorem 3.1]{2016_Russo} and  \cite[Proposition 3.3]{2024_MTU_arxiv}, applied to the present case.
    Again, let $(\widetilde{\mathbf{X}},\widetilde{\mu})$ and $(\widetilde{\mathbf{X}}^\prime,\widetilde{\mu}^\prime)$ be two solutions to equation \eqref{eq:SDE_con_killing_solo_dentro_lift}  on possibly different probability spaces, Brownian motions, and initial conditions distributed according to $\zeta_0$, such that, $\mathcal{L}(\zeta_0)=\mu_0$ and $\mu_0(dx)=\rho_0(x)dx$. As usual, $\rho_0(\cdot)$ is the initial condition to the PDE model \eqref{eq:PDE_rho_regularised},\eqref{eq:def_b_PDE}.

    Given $\nu\in\mathcal{P}^2\left(C^2\right)$, we indicate by $\Theta(\nu)$ the law of $\overline{\mathbf{X}}$, where $\overline{\mathbf{X}}=(\overline{X},\overline{\Lambda})$ is the strong solution of
    \begin{equation}\label{eq:Y_bar}
        \begin{aligned}
            &\overline{X}_t = X_0 + \int_0^t b\left(\int_0^s K*\Phi(\widetilde{\mu}^\prime_r)(\overline{X}_s)\,dr,\ \int_0^s\nabla K*\Phi(\widetilde{\mu}^\prime_r)(\overline{X}_s)dr\right)ds + \sqrt{2} W_t;\\
            &\overline{\Lambda}_t = \int_0^t \lambda c_0\exp\left( -\lambda \int_0^s K*\Phi(\widetilde{\mu}^\prime_r)(\overline{X}_s)dr \right)ds,
        \end{aligned}
    \end{equation}
    on the same probability space and same Brownian motion on which $\mathbf{X}$ lives.

    Since $\widetilde{\mu}^\prime$ is fixed, $\overline{\mathbf{X}}$ is solution of a classical SDE for which pathwise uniqueness holds. In fact, the boundedness (see \eqref{eq:bound_drift} in Proposition \ref{prop:continuity_boundedness_b}) and  Lipschitz continuity (see Proposition \ref{prop:lipschitz_prop_b}) of the drift coefficient $b$  assure that there exists a pathwise unique strong solution $\overline{\mathbf{X}}$ to \eqref{eq:Y_bar}. By Yamada-Watanabe theorem, $\widetilde{\mathbf{X}}^\prime$ and $\overline{\mathbf{X}}$ have the same distribution. Consequently, $\Theta(\widetilde{\mu}^\prime)=\mathcal{L}\left( \overline{\mathbf{X}} \right)=\mathcal{L}\left( \widetilde{\mathbf{X}}^\prime \right)=\widetilde{\mu}^\prime$.

    Finally, we prove that $\mathcal{L}(\widetilde{\mathbf{X}})=\mathcal{L}\left(\overline{\mathbf{X}}\right)$, i.e. $\widetilde{\mu}=\widetilde{\mu}^\prime$. From \eqref{eq8:proof_well_posed_SDE}, we get
    \begin{equation*}
        \forall t\in[0,T]\quad\quad D^2_t\left(\Theta(\widetilde{\mu}),\Theta(\widetilde{\mu}^\prime)\right) \,\leq\, C_T\,\int_0^t D_r^2(\widetilde{\mu},\widetilde{\mu}^\prime)\,dr
    \end{equation*}
    Since $\Theta(\widetilde{\mu})=\widetilde{\mu}$ and $\Theta(\widetilde{\mu}^\prime)=\widetilde{\mu}^\prime$, by Gronwall's lemma we conclude.
\end{proof}

\begin{theorem}\label{theo:killing}
    The McKean-Vlasov diffusion model \eqref{eq:killed_SDE_measure},\eqref{eq:def_killing_measure} admits a strong solution $X$ which is pathwise unique. Moreover, it also admits a weak solution which is unique in the sense of probability law.
\end{theorem}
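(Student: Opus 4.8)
The plan is to deduce the theorem directly from Proposition~\ref{prop:well_posed_SDE} by reinstating the killing mechanism on top of the well-posed auxiliary process. First I would take the pathwise unique strong solution $(\widetilde{X},\widetilde{\Lambda})$ to \eqref{eq:SDE_con_killing_solo_dentro_lift} provided by Proposition~\ref{prop:well_posed_SDE}, together with the independent variable $Z\sim Exp(1)$, and define the killing time $\tau:=\inf\{t\geq 0:\widetilde{\Lambda}_t\geq Z\}$. I would then set $X_t:=\widetilde{X}_t$ for $t\in[0,\tau)$ and $X_t:=\Delta$ for $t\geq\tau$, with the convention $X_\tau:=\lim_{t\to\tau^-}\widetilde{X}_t$. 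The central consistency check is that the sub-probability law $\nu_t(\cdot)=\mathbb{P}(X_t\in\cdot,\,t<\tau)$ coincides with $\Phi(\widetilde{\mu}_t)$, where $\widetilde{\mu}$ is the law of $(\widetilde{X},\widetilde{\Lambda})$. This follows from the definition of $\Phi$ together with the representation \eqref{eq:subprob_killed_SDE_2}, since conditionally on $\widetilde{X}$ the survival probability at time $t$ equals $e^{-\widetilde{\Lambda}_t}$. With this identification, the coefficients of \eqref{eq:killed_SDE_measure} evaluated along $\nu$ match those of \eqref{eq:SDE_con_killing_solo_dentro_lift} evaluated along $\Phi(\widetilde{\mu})$, so $X$ restricted to $[0,\tau)$ solves \eqref{eq:killed_SDE_measure} and $\tau$ satisfies \eqref{eq:def_killing_measure}. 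This yields existence of a strong solution.

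For pathwise uniqueness, suppose $X$ is any solution to the killed system on a probability space carrying $W$, $X_0$ and $Z$. On the survival set $\{t<\tau\}$ the process $X$ agrees with a solution of the pre-killing dynamics driven by the same data, while $\tau$ is a measurable functional of the path of $\widetilde{\Lambda}$ and of $Z$ through \eqref{eq:def_killing_measure}. Invoking the pathwise uniqueness from Proposition~\ref{prop:well_posed_SDE} for the auxiliary equation, the pre-$\tau$ part of $X$ is uniquely determined, hence so is $\tau$, and therefore the entire killed trajectory (equal to $\Delta$ after $\tau$) is uniquely determined. Uniqueness in law follows in the same spirit: the law of $X$ is a measurable image of the joint law of $(\widetilde{X},\widetilde{\Lambda},Z)$, which is unique by Proposition~\ref{prop:well_posed_SDE} and the prescribed law of $Z$; the existence of a weak solution is already delivered by the explicit construction above, so the Yamada--Watanabe principle is not even strictly required at this stage.

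The main obstacle I anticipate is the self-consistency of the fixed-point identification: the measure $\nu$ entering the coefficients of the killed SDE must be shown to be exactly the image $\Phi(\widetilde{\mu})$ of the law of the auxiliary lifted process, so that decoupling via \eqref{eq:SDE_con_killing_solo_dentro_lift} and then killing genuinely reconstructs a solution of the original system \eqref{eq:killed_SDE_measure},\eqref{eq:def_killing_measure} rather than a merely related one. Making this rigorous requires carefully verifying that the conditional survival probability $\mathbb{P}(\tau>t\mid\mathcal{F}_t)=e^{-\widetilde{\Lambda}_t}$ holds with respect to the correct filtration, using the independence of $Z$ and the time-change representation \eqref{eq:def_poisson}, and that $\Phi$ indeed maps $\widetilde{\mu}_t$ to $\nu_t$. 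The remaining measurability of $\tau$ and continuity of the killed trajectory up to $\tau$ are routine given the boundedness and Lipschitz estimates of the coefficients established in Propositions~\ref{prop:continuity_boundedness_b}--\ref{prop:boundedness_continuity_reaction_coeff}.
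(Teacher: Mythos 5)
Your proposal is correct and follows essentially the same route as the paper: deduce well-posedness of the killed system from that of the lifted auxiliary system of Proposition~\ref{prop:well_posed_SDE} by reinstating the killing via $\tau=\inf\{t:\widetilde{\Lambda}_t\geq Z\}$. In fact the paper's own proof is only a one-paragraph assertion that this transfer is a ``direct consequence''; your write-up supplies the details it omits, in particular the consistency check $\nu_t=\Phi(\widetilde{\mu}_t)$ via the independence of $Z$, which is exactly the point that makes the reduction rigorous.
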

\begin{proof}
    The original killed McKean–Vlasov process \eqref{eq:killed_SDE_measure} evolves in \(\mathbb{R}\) only up to the stopping time $\tau$ \eqref{eq:def_killing_measure}. However, we have established the well-posedness of a related lifted system, in which the killing mechanism is incorporated exclusively in the law of the process appearing in the coefficients, not in the dynamics of the process itself. Since this lifted system is well-posed on the entire interval \([0,T]\), the well-posedness of the original killed process, i.e. the process which incorporates killing also in its dynamics, is then a direct consequence.
\end{proof}

\section{A probabilistic interpretation of the  PDE model}\label{sec:probabilistic_interpretation}
This section represents the core of the probabilistic interpretation of the PDE \eqref{eq:PDE_rho_regularised},\eqref{eq:def_b_PDE} . The main objective is to prove that the time marginal $\nu_t$ admits a density $v(t,\cdot)$, which in turn satisfies the PDE \eqref{eq:PDE_rho_regularised},\eqref{eq:def_b_PDE}. To this end, the existence of the density is first established, and Itô’s formula is then applied to the solution of the killed McKean–Vlasov equation \eqref{eq:killed_SDE_measure},\eqref{eq:def_killing_measure} to derive the PDE satisfied by \(v\).

The family of sub-probability measures $\{\nu_t\}_{t\in[0,T]}$ as in \eqref{eq:subprob_killed_SDE} is absolutely continuous with respect to the Lebesgue measure. Indeed, following  \cite{Meleard_Coppoletta,Tomasevic}, we may prove the following result.
\begin{theorem}\label{theo:density}
    Let $ C_c^{\infty}:=C_c^{\infty}(\mathbb{R})$ be the space of $C^{\infty}$ real-valued functions with compact support. For any $t\in (0,T]$, the sub-probability measure $\nu_t$ defined in \eqref{eq:subprob_killed_SDE} admits a probability density function $v(t,\cdot)$. Moreover, for any $p\in (1,+\infty)$, $v(t,\cdot)\in L^p(\mathbb{R})$ uniformly in $[\varepsilon,T]$, with $\varepsilon>0$.
\end{theorem}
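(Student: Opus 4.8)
The plan is to prove the statement by a duality argument in $L^p$, reducing both the existence of the density and its integrability to a single dual estimate, and then to obtain that estimate through a Girsanov change of measure that exploits the boundedness of the drift established in Proposition~\ref{prop:continuity_boundedness_b}. Fix $t\in(0,T]$ and $\phi\in C_c^\infty$. Since the survival weight $w_t:=\exp(-\Lambda_t)$ takes values in $[0,1]$, the representation \eqref{eq:subprob_killed_SDE_int} gives
\[
\left|\int_{\mathbb{R}}\phi\,d\nu_t\right| = \big|\mathbb{E}[\phi(X_t)w_t]\big| \leq \mathbb{E}\big[|\phi(X_t)|\big].
\]
It therefore suffices to show, for $p\in(1,\infty)$ with conjugate exponent $p'$, that $\mathbb{E}[|\phi(X_t)|]\leq C\,\|\phi\|_{L^{p'}}$ with $C$ independent of $t$ on $[\varepsilon,T]$. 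The Riesz representation theorem, the dual of $L^{p'}$ being $L^p$ precisely for $p'\in(1,\infty)$, then produces a density $v(t,\cdot)\in L^p$ with $\|v(t,\cdot)\|_{L^p}\le C$. This is where the restriction $p\in(1,\infty)$ is essential.

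Next I would linearise the dynamics by Girsanov's theorem. Writing $X_t=X_0+\int_0^t\beta_s\,ds+\sqrt{2}\,W_t$ with $\beta_s:=b(\cdots)$, the bound $|\beta_s|\le M_b$ from \eqref{eq:bound_drift} ensures Novikov's condition, so the exponential martingale removing the drift defines an equivalent measure $\mathbb{Q}$ under which $X_t=X_0+\sqrt{2}\,\widetilde{W}_t$ for a $\mathbb{Q}$-Brownian motion $\widetilde{W}$ independent of $X_0$. Conditionally on $X_0\sim\rho_0$, the law of $X_t$ under $\mathbb{Q}$ has the smooth density $q_t^{\mathbb{Q}}=\rho_0*g_{2t}$, where $g_{2t}$ is the centred Gaussian density of variance $2t$; since $\rho_0\in C_b$, one has $\|q_t^{\mathbb{Q}}\|_\infty\le\|\rho_0\|_\infty$ uniformly in $t$. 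The density $Z_t:=d\mathbb{P}/d\mathbb{Q}|_{\mathcal{F}_t}$ is again an exponential with bounded integrand, and boundedness of $\beta$ yields $\mathbb{E}^{\mathbb{Q}}[Z_t^{p}]\le\exp\big(\tfrac{p^2-p}{4}M_b^2 T\big)$, uniformly on $[0,T]$.

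It then remains to combine these ingredients. Writing $\mathbb{E}[|\phi(X_t)|]=\mathbb{E}^{\mathbb{Q}}[|\phi(X_t)|\,Z_t]$ and applying Hölder's inequality with the conjugate exponents $p'$ and $p$,
\[
\mathbb{E}[|\phi(X_t)|]\leq\big(\mathbb{E}^{\mathbb{Q}}[|\phi(X_t)|^{p'}]\big)^{1/p'}\big(\mathbb{E}^{\mathbb{Q}}[Z_t^{p}]\big)^{1/p}\leq\|\rho_0\|_\infty^{1/p'}\,\|\phi\|_{L^{p'}}\,\exp\!\Big(\tfrac{(p-1)M_b^2 T}{4}\Big),
\]
where the first factor uses $\mathbb{E}^{\mathbb{Q}}[|\phi(X_t)|^{p'}]=\int|\phi|^{p'}q_t^{\mathbb{Q}}\le\|\rho_0\|_\infty\|\phi\|_{L^{p'}}^{p'}$. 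This is exactly the dual bound required in the first paragraph, with a constant uniform on all of $(0,T]$, and in particular on $[\varepsilon,T]$, which closes the argument.

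The main obstacle I anticipate is the rigorous justification of the change of measure in this path-dependent, McKean–Vlasov setting: one must verify that $\beta_s=b(\cdots)$ is a genuinely bounded adapted process once $\nu$ is frozen as the (already constructed) law of the solution, so that $X$ solves a classical SDE with bounded drift to which Girsanov and Novikov apply. The remainder is bookkeeping, namely securing moment bounds for $Z_t$ that are uniform in $t$; this is why the $L^\infty$ control $\|q_t^{\mathbb{Q}}\|_\infty\le\|\rho_0\|_\infty$ is convenient, since the alternative heat-kernel bound $\|g_{2t}\|_{L^p}\sim t^{-(1-1/p)/2}$ obtained via Young's inequality would only be uniform away from the origin. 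Either route delivers the stated uniformity on $[\varepsilon,T]$.
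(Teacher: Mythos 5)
Your proposal follows essentially the same architecture as the paper's proof: bound the functional $f\mapsto\int f\,d\nu_t$ by $\mathbb{E}[|f(X_t)|]$ using that the survival weight lies in $[0,1]$, remove the (bounded, hence Novikov-admissible) drift by Girsanov, split off the Radon--Nikodym density by H\"older, and conclude by density of $C_c^\infty$ in $L^q$ and Riesz representation. The one genuine difference is in how the Gaussian factor is handled. The paper applies a second H\"older inequality against the heat kernel and uses $\lVert p(t,\cdot)\rVert_{L^{q_2}}\sim t^{-1/(2p_2)}$, which produces the constant $\widetilde{C}\,t^{-1/(2q)}$ and is precisely why the statement is only claimed uniformly on $[\varepsilon,T]$. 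You instead exploit $\rho_0\in C_b(\mathbb{R})$ (which is part of the standing assumption \eqref{eq:inital_datum_SDE}) to bound the law of $X_0+\sqrt{2}W_t$ under $\mathbb{Q}$ by $\lVert\rho_0\rVert_\infty$ in $L^\infty$, uniformly in $t$; this yields a constant uniform on all of $(0,T]$ and thus a slightly stronger conclusion, at the price of actually using the boundedness of the initial density rather than only its being a probability density. Both computations of the $Z$-moment are equivalent (your $\mathbb{E}^{\mathbb{Q}}[Z_t^p]$ is the paper's $\Xi^{q_3}$ estimate in disguise), and your closing caveat about freezing $\nu$ so that $X$ solves a classical SDE with bounded adapted drift is exactly the role played in the paper by the auxiliary process $\widetilde{X}$ of \eqref{eq:SDE_con_killing_solo_dentro}.
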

\begin{proof}
    The existence of a density probability function is proven by applying in a standard way Riesz theorem in $L^p$ spaces. Let  $p>1$ be fixed, and $q$ be such that $\frac{1}{p} \,+\, \frac{1}{q} =1$.
    
    Let $\left(C_c^{\infty}\right)^*$ be the dual space of $C_c^{\infty}$. For any  $t\in[0,T]$, we can  define a linear functional $H_t(\cdot)\in \left(C_c^{\infty}\right)^*$ as  
    \begin{equation*}
         H_t(f) :=\int_{\mathbb{R}} f(x)\nu_t(dx) = \mathbb{E}\left[ f(\widetilde{X}_t)\exp\left(-\int_0^t \lambda c_0\exp\left( -\lambda \int_0^s K*\nu_r(\widetilde{X}_s)dr \right)ds \right) \right],
    \end{equation*}
   for any $f\in C_c^{\infty}$, where $\widetilde{X}$ is a solution to \eqref{eq:SDE_con_killing_solo_dentro}.

    Fix \( t \in (0,T] \). If \( H_t \) satisfies the continuity bound  
    \begin{equation}\label{eq:continuity_H_t_Lq}
        |H_t(f)| \leq \overline{C} \|f\|_{L^q},
    \end{equation}
    for all \( f \in C_c^\infty(\mathbb{R}) \), then it can be extended continuously to the whole space \( L^q(\mathbb{R}) \), since \( C_c^\infty(\mathbb{R}) \) is dense in \( L^q(\mathbb{R}) \). Indeed, by the Hahn--Banach theorem, there exists a continuous linear extension \( \widetilde{H}_t \in (L^q)^* \) such that \( \widetilde{H}_t(f) = H_t(f) \) for all \( f \in C_c^\infty(\mathbb{R}) \).
    
    By the Riesz representation theorem (see, e.g., \cite{Brezis}), the above functional \( \widetilde{H}_t \) can be represented by a unique function \( v(t, \cdot) \in L^p(\mathbb{R}) \), with \( 1/p + 1/q = 1 \), such that
    \begin{equation*}
        H_t(f) = \int_{\mathbb{R}} f(x)\nu_t(dx) = \int_{\mathbb{R}} f(x)v(t,x)\,dx, \quad \text{for all } f \in C_c^\infty(\mathbb{R}).
    \end{equation*}
    This means that the measure \( \nu_t \) admits a density \( v(t,\cdot) \in L^p(\mathbb{R}) \), and its norm satisfies
    \begin{equation*}
        \|v(t,\cdot)\|_{L^p(\mathbb{R})} = \|\widetilde{H}_t\|_{(L^q)^*} = \sup_{f \in L^q(\mathbb{R})} \frac{|H_t(f)|}{\|f\|_{L^q}}.
    \end{equation*}

    We now prove inequality \eqref{eq:continuity_H_t_Lq}.
    To start, let us notice that
    \begin{equation*}
        H_t(f) \leq \mathbb{E}\left[f(\widetilde{X}_t)\right].
    \end{equation*}
    Now, let $\mathbb{Q} $ be the absolutely continuous measure  with respect to $\mathbb P$ with density $Z_T$ given by
    \begin{equation*}
        \begin{aligned}
            Z_T  := \exp\Bigg(&-\int_0^T \frac{1}{\sqrt{2}}b\left(\int_0^sK*\nu_r(\widetilde{X}_s)dr,\int_0^s\nabla K*\nu_r(\widetilde{X}_s)dr\right)dW_s\\
            &-\,\frac{1}{4}\int_0^T\Bigg|b\left(\int_0^sK*\nu_r(\widetilde{X}_s)dr,\int_0^s\nabla K*\nu_r(\widetilde{X}_s)dr\right)\Bigg|^2ds\Bigg).
        \end{aligned}
    \end{equation*}
    Since by Proposition \ref{prop:continuity_boundedness_b} the  drift is bounded in the finite time $T$,  by the  Novikov  condition, $Z_T$ is a martingale. Then, by Girsanov theorem, $\frac{1}{\sqrt{2}} \widetilde{X}$ is a Wiener process under $\mathbb{Q}$ and the linear operator may be expressed in term of $Z_T$ as
    \begin{equation}\label{eq1:proof_density}
        H_t(f) \leq \mathbb{E}^{\mathbb{P}}\left[ f\left(\sqrt{2}W_t \,+\, X_0\right)(Z_T)^{-1} \right].
    \end{equation}
Now, let us choose $p_2\in(1,q)$ and $q_2$ such that $\frac{1}{p_2} \,+\, \frac{1}{q_2} =1$. By applying Hölder's inequality to \eqref{eq1:proof_density} with $p_3 = \frac{q}{p_2}$ and $q_3$ such that $\frac{1}{p_3} \,+\, \frac{1}{q_3} =1$, we have
    \begin{equation*}
        |H_t(f)| \,\leq\, \underbrace{\mathbb{E}\left[|f(X_0 \,+\, \sqrt{2}W_t)|^{p_3}\right]^{\frac{1}{p_3}}}_{:=\,\Theta}\underbrace{\mathbb{E}\left[(Z_T)^{-q_3}\right]^{\frac{1}{q_3}}}_{:=\,\Xi}.\\        
    \end{equation*}
    Let us estimate $\Theta$ and $\Xi$. By Hölder's inequality the following estimates in terms of the $L^q$ norm of $f$ can be obtained
    \begin{align*}
        \Theta \,&=\, \left( \int_{\mathbb{R}}\int_{\mathbb{R}} |f(x+\sqrt{2}y)|^{p_3}\,p(t,y)dy\rho_0(x)dx \right)^{\frac{1}{p_3}}&&\\
        &\leq\,\left( \int_{\mathbb{R}} \left(\int_{\mathbb{R}} |f(x+\sqrt{2}y)|^{q}dy\right)^{\frac{1}{p_2}}\, \lVert p(t,\cdot)\rVert_{L^{q_2}(\mathbb{R})}\rho_0(x)dx \right)^{\frac{1}{p_3}}&&\\
        &=\,\left( \lVert f \rVert_{L^{q}(\mathbb{R})}^{\frac{q}{p_2}} \lVert p(t,\cdot) \rVert_{L^{q_2}(\mathbb{R})} \right)^{\frac{p_2}{q}} \,=\, C\,\,t^{-\frac{1}{2 q }  }
        \lVert f \rVert_{L^{q}(\mathbb{R})},&&
    \end{align*}
    where $p(t,\cdot)$ is the Gaussian probability density   and  
    \begin{equation*}
     \lVert p(t,\cdot) \rVert_{L^{q_2}(\mathbb{R})} = \left(\int_{\mathbb R}\left|\frac{1}{\sqrt{2\pi t}}\exp\left( -\frac{x^2}{2t} \right) \right|^{q_2}dx\right)^{\frac{1}{q_2}}={\frac{(2\pi)^{\frac{1}{2}\left(\frac{1}{q_2}-1\right)}}{q_2^{1/2} }}  t^{-\frac{1}{2 }\left(1-\frac{1}{q_2}\right)}
       = C\,\,t^{-\frac{1}{2p_2 }  }.
    \end{equation*}
     
    On the other hand,  for the $\Xi$ term, by Cauchy-Schwartz inequality, the properties of the exponential martingale properties, and \eqref{eq:bound_drift} we get 
    \begin{eqnarray*}
        \Xi^{q_3} &=& \mathbb{E}\Bigg[\exp\Bigg\{ \frac{q_3}{\sqrt{2}}\int_0^T b\left(\int_0^sK*\nu_r(\widetilde{X}_s)dr,\int_0^s\nabla K*\nu_r(\widetilde{X}_s)dr \right)dW_s\\
        && \hspace{2cm}-\,\frac{q_3^2}{4}\int_0^T\Bigg| b\left(\int_0^sK*\nu_r(\widetilde{X}_s)dr,\int_0^s\nabla K*\nu_r(\widetilde{X}_s)dr \right)\Bigg|^2ds\\
        &&\hspace{2cm}+\,\left(\frac{q_3^2}{4} \,+\, \frac{q_3}{4}\right)\int_0^T\Bigg|b\left(\int_0^sK*\nu_r(\widetilde{X}_s)dr,\int_0^s\nabla K*\nu_r(\widetilde{X}_s)dr \right)\Bigg|^2ds\Bigg\} \Bigg]\\
        &\leq&\mathbb{E}\Bigg[\exp\Bigg\{ \frac{2q_3}{\sqrt{2}}\int_0^T b\left(\int_0^sK*\nu_r(\widetilde{X}_s)dr,\int_0^s\nabla K*\nu_r(\widetilde{X}_s)dr \right)dW_s\\
        &&\hspace{2cm}-\frac{q_3^2}{2}\int_0^T\Bigg| b\left(\int_0^sK*\nu_r(\widetilde{X}_s)dr,\int_0^s\nabla K*\nu_r(\widetilde{X}_s)dr \right)\Bigg|^2ds\Bigg\}\Bigg]^{1/2}\\
        &&\cdot\mathbb{E}\Bigg[\exp\Bigg\{\left(\frac{q_3^2}{2} \,+\, \frac{q_3}{2}\right)\int_0^T\Bigg|b\left(\int_0^sK*\nu_r(\widetilde{X}_s)dr,\int_0^s\nabla K*\nu_r(\widetilde{X}_s)dr \right)\Bigg|^2ds\Bigg\} \Bigg]^{1/2}\\
        &\leq&\exp\Bigg\{\left(\frac{q_3^2}{2} \,+\, \frac{q_3}{2}\right)TM_b^2\Bigg\}=:C^\prime(T,M_b).
       \end{eqnarray*} 
    
   Thus, the continuity properties \eqref{eq:continuity_H_t_Lq}  with $ \overline{C} = C C^\prime(T,M_b) t^{-\frac{1}{2q}}=\widetilde{C}t^{-\frac{1}{2q}}$ and the existence of a density $u_t$ for $\nu_t$ are proven. Furthermore, for $t>\epsilon$
    \begin{equation*}
        \lVert v(t,\cdot) \rVert_{L^{p}(\mathbb{R})} = \lVert H_t \rVert_{\left(L^{q} \left(\mathbb{R}\right)\right)^*} = \sup_{f\in L^{q}\left(\mathbb{R}\right)}\frac{\left|H_t(f)\right|}{\lVert f\rVert_{L^{q}\left(\mathbb{R}\right)}} \leq \widetilde{C}t^{-\frac{1}{2q}}, 
    \end{equation*}
    i.e. $u(t,\cdot)\in L^p$ and $ \sup_{t\in[\varepsilon,T]}\lVert u(t,\cdot) \rVert_{L^p(\mathbb{R})} < \infty$.
\end{proof}

By applying Ito’s formula to the solution of the McKean–Vlasov SDE with killing \eqref{eq:killed_SDE_measure},\eqref{eq:def_killing_measure}, we can characterize the density $v$ as a weak solution to the PDE \eqref{eq:PDE_rho_regularised},\eqref{eq:def_b_PDE}.
\begin{theorem}\label{theo:prob_interpretation}
    The density $v(t,\cdot)$ of the sub-probability measure $\nu_t$ defined in \eqref{eq:subprob_killed_SDE} satisfies
    \begin{align*}
        \int_{\mathbb{R}}f(x)v(t,x)dx = &\int_{\mathbb{R}}f(x)\rho_0(x)dx \,+\, \int_0^t\int_{\mathbb{R}}\Delta f(x)v(s,x)dxds&&\\
        &+\int_0^t\int_{\mathbb{R}}\nabla f(x)b\left(K*v(\cdot,x)(s),\nabla K*v(\cdot,x)(s)\right)v(s,x)dxds&&\\
        &-\int_0^t \int_{\mathbb{R}}  f(x)\lambda c_0\exp\big( -\lambda K*v(\cdot,x)(s)\big)v(s,x)dxds,
    \end{align*}
    which reads as a weak formulation of  the  PDE \eqref{eq:PDE_rho_regularised},\eqref{eq:def_b_PDE}.
\end{theorem}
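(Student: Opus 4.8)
The plan is to apply Itô's formula to the survival-weighted functional $f(\widetilde{X}_t)e^{-\widetilde{\Lambda}_t}$, where $(\widetilde{X},\widetilde{\Lambda})$ is the pathwise-unique solution of the auxiliary system \eqref{eq:SDE_con_killing_solo_dentro} furnished by Proposition \ref{prop:well_posed_SDE}, then to take expectations and recognise each resulting term as an integral against the density $v$. I fix a test function $f\in C_c^{\infty}(\mathbb{R})$, so that $f,\nabla f,\Delta f$ are bounded, and recall from \eqref{eq:subprob_killed_SDE_int} together with Theorem \ref{theo:density} the starting identity
\begin{equation*}
    \int_{\mathbb{R}} f(x)v(t,x)\,dx = \int_{\mathbb{R}} f(x)\,\nu_t(dx) = \mathbb{E}\left[ f(\widetilde{X}_t)\,e^{-\widetilde{\Lambda}_t} \right].
\end{equation*}

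First I would observe that $\widetilde{\Lambda}$ is absolutely continuous, hence of finite variation, so that in applying Itô's formula to $G(x,\ell)=f(x)e^{-\ell}$ the only second-order contribution comes from the diffusion of $\widetilde{X}$, whose quadratic variation is $d\langle\widetilde{X}\rangle_s=2\,ds$. Writing $\beta(s,\widetilde{X}_s)$ for the drift $b\big(\int_0^s K*\nu_r(\widetilde{X}_s)dr,\ \int_0^s\nabla K*\nu_r(\widetilde{X}_s)dr\big)$, this yields
\begin{align*}
    d\!\left(f(\widetilde{X}_s)e^{-\widetilde{\Lambda}_s}\right) = \;& e^{-\widetilde{\Lambda}_s}\Delta f(\widetilde{X}_s)\,ds + e^{-\widetilde{\Lambda}_s}\nabla f(\widetilde{X}_s)\,\beta(s,\widetilde{X}_s)\,ds \\
    & - e^{-\widetilde{\Lambda}_s} f(\widetilde{X}_s)\,\lambda c_0\exp\!\Big(-\lambda\textstyle\int_0^s K*\nu_r(\widetilde{X}_s)\,dr\Big)\,ds + \sqrt{2}\,e^{-\widetilde{\Lambda}_s}\nabla f(\widetilde{X}_s)\,dW_s,
\end{align*}
where the factor $\tfrac12\cdot 2=1$ in front of $\Delta f$ accounts for the $\sqrt{2}$ diffusion coefficient, and the killing term uses $d\widetilde{\Lambda}_s=\lambda c_0\exp(-\lambda\int_0^s K*\nu_r(\widetilde{X}_s)dr)\,ds$.

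Next I would integrate on $[0,t]$ and take expectations. The stochastic integral is a true martingale with zero mean, since its integrand is bounded ($e^{-\widetilde{\Lambda}_s}\le 1$ and $\nabla f$ has compact support), so it drops out; using $\widetilde{\Lambda}_0=0$ and $\widetilde{X}_0\sim\rho_0$ identifies the initial term as $\int_{\mathbb{R}}f(x)\rho_0(x)dx$. The crucial identification step is the observation that, for any bounded measurable $g$, the definition of $\nu_s$ in \eqref{eq:subprob_killed_SDE} gives $\mathbb{E}[g(\widetilde{X}_s)e^{-\widetilde{\Lambda}_s}]=\int_{\mathbb{R}}g(x)\nu_s(dx)=\int_{\mathbb{R}}g(x)v(s,x)dx$. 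Applying this with $g$ equal to $\Delta f$, to $\nabla f\cdot\beta(s,\cdot)$, and to $-f\cdot\lambda c_0\exp(-\lambda\int_0^s K*\nu_r(\cdot)dr)$ respectively, and interchanging $\mathbb{E}$ with $\int_0^t ds$ by Fubini (again justified by boundedness), produces exactly the three integral terms on the right-hand side. Finally, rewriting every convolution $K*\nu_r$, $\nabla K*\nu_r$ as $K*v(\cdot,\,\cdot)(s)$, $\nabla K*v(\cdot,\,\cdot)(s)$ via $\nu_r(dy)=v(r,y)dy$ yields the stated weak formulation.

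The main subtlety I anticipate is the identification step: one must verify that the integrands appearing after Itô are genuinely functions of the single variable $\widetilde{X}_s$ (so that the weighted-density identity applies), even though the convolution arguments $\int_0^s K*\nu_r(\widetilde{X}_s)dr$ couple a time integral of the measure flow with the current position. This holds because the convolution is always evaluated at the present position $\widetilde{X}_s$: writing $\int_0^s K*\nu_r(\widetilde{X}_s)dr=\int_0^s\int_{\mathbb{R}}K(\widetilde{X}_s-y)v(r,y)\,dy\,dr$ exhibits each such term as a fixed deterministic function of $\widetilde{X}_s$ alone, once the flow $\{\nu_r\}$ is frozen. The genuine path-dependence of the model resides entirely in $\widetilde{\Lambda}_s$, which enters only through the survival weight $e^{-\widetilde{\Lambda}_s}$ and is already absorbed into the very definition of $\nu_s$; consequently no further path-functional terms survive, and the identity can be applied term by term. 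Boundedness of $b$ (Proposition \ref{prop:continuity_boundedness_b}) and of the killing rate by $\lambda c_0$ guarantees all the integrability needed for the martingale to vanish and for Fubini's theorem.
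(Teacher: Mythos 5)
Your proposal is correct, but it takes a genuinely different route from the paper. The paper applies It\^o's formula directly to $f(X_t)$ for the \emph{killed} process \eqref{eq:killed_SDE_measure}, so a jump term $-f(X_{\tau})\ind{t\geq\tau}$ appears; this is then rewritten as $\int_0^t f(X_{s-})\,dN_s$ using the time-changed Poisson process \eqref{eq:def_poisson}, and split into the compensated martingale $\int_0^t f(X_{s-})\,d(N_s-\Lambda_s)$ plus the compensator integral, which upon taking expectations produces the reaction term; the identification $\mathbb{E}[f(X_s)]=\int f\,d\nu_s$ rests on the cemetery convention $f(\Delta)=0$. You instead work with the unkilled auxiliary pair $(\widetilde{X},\widetilde{\Lambda})$ of \eqref{eq:SDE_con_killing_solo_dentro} and apply It\^o to the continuous semimartingale $f(\widetilde{X}_t)e^{-\widetilde{\Lambda}_t}$, so the killing enters only through the finite-variation factor $e^{-\widetilde{\Lambda}_t}$ and the reaction term arises from $d(e^{-\widetilde{\Lambda}_s})=-e^{-\widetilde{\Lambda}_s}\,d\widetilde{\Lambda}_s$; the identification step is the Feynman--Kac identity $\mathbb{E}[g(\widetilde{X}_s)e^{-\widetilde{\Lambda}_s}]=\int_{\mathbb{R}}g\,d\nu_s$, i.e.\ \eqref{eq:subprob_killed_SDE_int} extended to bounded measurable $g$. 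The two compensator computations are of course equivalent, but your version buys a cleaner argument: no jump calculus, no need to justify the martingale property of the compensated Poisson integral, and all integrability claims reduce to boundedness of $f$, $b$ and $e^{-\widetilde{\Lambda}}\leq 1$. What it costs is that the statement is then proved for the auxiliary system rather than for the killed process itself, so you are implicitly relying on the equivalence of the two formulations (Proposition \ref{prop:well_posed_SDE} and Theorem \ref{theo:killing}, plus the fact that both define the same flow $\{\nu_t\}$); it would be worth one sentence making that reliance explicit. Your closing remark that each convolution term is a deterministic function of the current position once the flow $\{\nu_r\}$ is frozen is exactly the right point to flag, and it is what legitimises applying the weighted-density identity term by term.
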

\begin{proof}
    Let $f\in C_b^2\left(\mathbb{R}\right)$ and \(t\in[0, T]\). By applying Ito's formula to the process \(f(X_t)\),
    \begin{equation*}
        \begin{aligned}
            f\left(X_t\right) \,=\, &f\left(X_0\right) - f\left(X_{\tau}\right)\ind{t\geq \tau} + \int_0^t \Delta f(X_s)ds\\
            &+ \int_0^t \nabla f(X_s)b\left(\int_0^s K*\nu_r(X_s)dr,\int_0^s \nabla K*\nu_r(X_s)dr\right)ds&&\\
            &+ \sqrt{2}\int_0^t\nabla f(X_s)dW_s.
        \end{aligned}
    \end{equation*}
    By \eqref{eq:def_poisson} and \eqref{eq:def_compensator_poisson}, given the zero mean martingale $\{N_t - \Lambda_t\}_t$, the following decomposition holds
    \begin{equation*}
        \begin{aligned}
            &f\left(X_{\tau}\right)\ind{t\geq \tau} = \int_0^t f\left(X_{s-}\right)dN_s\\
            &=\int_0^t f(X_{s-}) d\left(N_s - \Lambda_s\right) +  
            \int_0^t f\left(X_{s}\right)\lambda c_0\exp\left( -\lambda \int_0^s K*\nu_r(X_s)dr \right)ds.
            \end{aligned}
    \end{equation*}
    Therefore,
    \begin{equation*}
        \begin{aligned}
            f\left(X_t\right) \,=\, &f\left(X_0\right) + \int_0^t \Delta f(X_s)ds\\
            &+\int_0^t \nabla f(X_s)b\left(\int_0^s K*\nu_r(X_s)dr,\int_0^s \nabla K*\nu_r(X_s)dr \right)ds&&\\
            &-\int_0^t f\left(X_s\right)\lambda c_0\exp\left( -\lambda \int_0^s K*\nu_r(X_s)dr\right)ds&&\\
            &+ \sqrt{2}\int_0^t\nabla f(X_s)dW_s-\int_0^t f(X_{s-}) d\left(N_s - \Lambda_s\right),
        \end{aligned}
    \end{equation*}
     
    Taking the expectation, we get
    \begin{align*}
        \mathbb{E}\left[f\left(X_t\right)\right] \,=\, &\mathbb{E}\left[f\left(X_0\right)\right] + \int_0^t \mathbb{E}\left[\Delta f(X_s)\right]ds\\
        &+\int_0^t \mathbb{E}\left[\nabla f(X_s)b\left(\int_0^s K*\nu_r(X_s)dr,\int_0^s \nabla K*\nu_r(X_s)dr\right)\right]ds&&\\
        &-\int_0^t \mathbb{E}\left[f\left(X_s\right)\lambda c_0\exp\left( -\lambda \int_0^s K*\nu_r(X_s)dr\right)\right]ds.&&
    \end{align*}
    By Theorem \ref{theo:density}, for any $t\in (0,T]$ $\nu_t$ has density $v(t,\cdot)$. Hence, we may rewrite the above equation as
    \begin{align*}
        \int_{\mathbb{R}}f(x)v(t,x)dx = &\int_{\mathbb{R}}f(x)\rho_0(x)dx \,+\, \int_0^t\int_{\mathbb{R}}\Delta f(x)v(s,x)dxds&&\\
        &+\int_0^t\int_{\mathbb{R}}\nabla f(x)b\left(K*v(\cdot,x)(s),\nabla K*v(\cdot,x)(s)\right)v(s,x)dxds&&\\
        &-\int_0^t \int_{\mathbb{R}}  f(x)\lambda c_0\exp\big( -\lambda K*v(\cdot,x)(s)\big)v(s,x)dxds.
    \end{align*}
    Therefore, $\{v(t,\cdot)\}_{t\in[0,T]}$ is a weak solution to the equation \eqref{eq:PDE_rho_regularised},\eqref{eq:def_b_PDE}.
\end{proof}

\section{The associated finite particle system}\label{sec:PS_and_chaos}
A system of Brownian particles, which live and interact up to their survival time, can be naturally associated to  equations \eqref{eq:killed_SDE_measure},\eqref{eq:def_killing_measure}.  Note that the empirical measure of the alive particles, that are the particle in the system which have not yet reacted, in such a particle system approximates the sub-probability law \eqref{eq:subprob_killed_SDE_2} of the killed McKean–Vlasov process.

Let $N\in\mathbb{N}$. Consider the product probability space $\left( \Omega^N,\mathcal{F}^{\otimes N}, \mathbb{P}^{\otimes N} \right)$ filtered by a natural extension of the original filtration to the product space, and an $N$-dimensional Brownian motion $\{\mathbf{W}_t\}_{t\in[0,T]}$ adapted to it. Let $\{X_0^i\}_{i=1}^N$ be a family of $\mathcal{F}_0$-measurable and $\mathbb{R}$-valued random variables which are independent and identically distributed according to $\zeta_0$, where $\zeta_0$ is as in \eqref{eq:inital_datum_SDE}. Let us also assume that $X_0^i$ is independent of $W^j$ for every $i,j\in\{1,\dots,N\}$.

Let $t\in I(\tau^i,T):=[0,\tau^i)\cap [0, T]$, where $\tau^i$ is an $\mathbb{F}$-stopping time, and $i=1,\dots,N$. The dynamics of the $i$-th particle is an $\mathbb{R}\cup\{\Delta\}$-valued process satisfying the equation
\begin{equation}\label{eq:particle_system}
    \begin{aligned}
        &\xi_t^i \,=\, \xi_0^i \,+\, \int_0^t b\left( u_N\left(\cdot,\xi_s^i\right)(s),\nabla u_N\left(\cdot,\xi_s^i\right)(s) \right)ds \,+\, \sqrt{2}W_t^i,\quad &t\in I(\tau^i,T);\\
        &\xi_t^i \in \{\Delta\},\quad &t\geq \tau^i,
    \end{aligned}
\end{equation}
where $\xi_0^i \,=\, X_0^i$, with the convention $\xi_{\tau^i}^i:=\lim_{t\rightarrow\tau^i-}\xi_t^i$.

As already mentioned, we call \emph{alive} the particles which have not yet reacted and denote by   $\Gamma^N_t\subseteq\{1,\dots,N\}$ the subset of indexes corresponding to alive particles  out of $N\in \mathbb N$ at time $t$. The empirical measure and empirical density of the particle system \eqref{eq:particle_system} are
\begin{eqnarray}\label{eq:empirical_measure_mu}
\mu_t^N (\cdot):=\frac{1}{N}\sum_{i\in\Gamma_t^N} \varepsilon_{\xi_t^i}(\cdot)=\frac{1}{N}\sum_{i=1}^N \varepsilon_{\xi_t^i}(\cdot)\mathbbm{1}_{(t,\infty)}(\tau^i)\end{eqnarray}
and
$$u_N\left(t,x\right):= \left(K*\mu_t^N\right)(x) := \int_{\mathbb{R}}K(x-y)\mu_t^N(dy),$$
respectively, while the operators $u_N\left(\cdot,\xi_s^i\right)(s)$ and $\nabla u_N\left(\cdot,\xi_s^i\right)(s)$ are defined as in \eqref{eq:def_rho_integral_and_convolution}. The stopping time $\tau^i$ is the lifetime of the $i$-th particle defined as
\begin{equation}\label{eq:stopping_time_PS}
    \tau^i = \tau^i\left(\xi^i\right) \,:=\, \inf_{t\geq 0}\Bigg\{ \int_0^t \lambda c_0\exp\left(-\lambda u_N(\cdot,\xi_s^i)(s) \right)ds \,\geq\, Z_i \Bigg\},
\end{equation}
where $Z_i\sim Exp(1)$ is independent of $\xi^{1,N},\dots,\xi^{N,N}$.

\begin{remark}\label{rem:Poisson_and_PS}
    The particles are exchangeable by definition. Also, let $\{N^{0,i}\}_{i=1}^N$ be a family of standard Poisson processes, such that for every $i,j = 1, \dots, N$, the process $N^{0,i}$ is independent from $W^j$ and $X_0^j$. Then, the killing time $\tau^i$ of the $i$-th particle is defined as the first and only jump time of a process $N^i_t$, obtained by applying a suitable time change to $N^{0,i}$. That is,
    \begin{equation*}
        N_t^i := N_{\Lambda_t^i}^{0,i},
    \end{equation*}
    where
    \begin{equation*}
        \Lambda_t^i := \int_0^t \lambda c_0\exp\left(-\lambda u_N(\cdot,\xi_s^i)(s) \right)ds
    \end{equation*}
    is the compensator of the process $N_t^i$ itself. Accordingly, the sub-probability empirical law may be expressed as
    \begin{equation}\label{eq:empirical_measure_mu_Lambda}
        \mu_t^N (\cdot):=\frac{1}{N} \sum_{i=1}^N \varepsilon_{(\xi_t^i,\Lambda_t^i)}\left(\cdot \times (0,Z^i)\right),
    \end{equation}
    where $\{Z^i\}_{i=1}^N$ is a family of i.i.d. random variables, with $Z^i\sim \exp(1)$ for every $i=1,\dots,N$, and independent of $\{W^1,...,W^N\}$. Note that the empirical measure \eqref{eq:empirical_measure_mu_Lambda} may be seen as the natural estimator of the bivariate process \eqref{eq:SDE_con_killing_solo_dentro}.
\end{remark}

We end the study with  the analysis of the particle system \eqref{eq:particle_system} and the proof that it is well-posed and has a unique strong solution. Precisely, we have the following result.
\begin{proposition}\label{prop:well_posed_PS}
   Let $T>0$ and $N\in\mathbb N$ fixed.  The particle system \eqref{eq:particle_system} with drift given by \eqref{eq:def_b_PDE} admits a pathwise unique strong solution $\xi_t=\left(\xi_1^t,\ldots,\xi_N^t \right), t\in [0,T]$.
\end{proposition}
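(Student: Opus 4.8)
The plan is to follow the two-step strategy already used for the McKean--Vlasov SDE in Proposition~\ref{prop:well_posed_SDE} and Theorem~\ref{theo:killing}, while exploiting the decisive simplification that, for fixed $N$, the empirical measure $\mu_t^N$ is a \emph{deterministic functional of the particle trajectories} rather than a distributional unknown. Following Remark~\ref{rem:Poisson_and_PS}, I first lift the system to the $\mathbb{R}^{2N}$-valued process $(\widetilde{\xi},\widetilde{\Lambda})=\{(\widetilde{\xi}^i,\widetilde{\Lambda}^i)\}_{i=1}^N$ in which the killing enters only through the coefficients via $\Phi$, and not through the dynamics:
\begin{align*}
\widetilde{\xi}_t^i &= \xi_0^i + \int_0^t b\left(\int_0^s K*\Phi(\widetilde{\mu}_r^N)(\widetilde{\xi}_s^i)\,dr,\ \int_0^s \nabla K*\Phi(\widetilde{\mu}_r^N)(\widetilde{\xi}_s^i)\,dr\right) ds + \sqrt{2}\,W_t^i,\\
\widetilde{\Lambda}_t^i &= \int_0^t \lambda c_0 \exp\left(-\lambda \int_0^s K*\Phi(\widetilde{\mu}_r^N)(\widetilde{\xi}_s^i)\,dr\right) ds,
\end{align*}
where $\widetilde{\mu}_r^N=\frac1N\sum_{j=1}^N\varepsilon_{(\widetilde{\xi}_r^j,\widetilde{\Lambda}_r^j)}$, so that $\Phi(\widetilde{\mu}_r^N)=\frac1N\sum_{j=1}^N e^{-\widetilde{\Lambda}_r^j}\varepsilon_{\widetilde{\xi}_r^j}$ and $K*\Phi(\widetilde{\mu}_r^N)(x)=\frac1N\sum_{j=1}^N e^{-\widetilde{\Lambda}_r^j}K(x-\widetilde{\xi}_r^j)$. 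Since this sum runs over finitely many particles, the lifted system is a genuine \emph{path-dependent} SDE in $\mathbb{R}^{2N}$ with no nonlinearity in the law; consequently I can work directly with the Euclidean norm on $\mathbb{R}^{2N}$ and dispense entirely with the Wasserstein distance $D_t^2$ of Definition~\ref{def:Wasserstein}.

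Next I would verify that the coefficients fall within the classical well-posedness framework. The diffusion is the constant $\sqrt{2}$, hence trivially Lipschitz. The drift is bounded by Proposition~\ref{prop:continuity_boundedness_b}, and the time-integrated convolutions satisfy the same a priori bounds as in \eqref{eq:boundedness_3}--\eqref{eq:boundedness_4}, namely $0\le\int_0^s K*\Phi(\widetilde{\mu}_r^N)(x)\,dr\le M_K T$ and $\big|\int_0^s \nabla K*\Phi(\widetilde{\mu}_r^N)(x)\,dr\big|\le M_K' T$, because $K\le M_K$, $|\nabla K|\le M_K'$ and each $\Phi(\widetilde{\mu}_r^N)$ has total mass at most one. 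On this bounded range, the continuity estimate of Proposition~\ref{prop:continuity_boundedness_b}, the Lipschitz properties \eqref{eq:K_LK}--\eqref{eq:nablaK_LprimeK} of $K$ and $\nabla K$, and the elementary inequality $|e^{-a}-e^{-b}|\le|a-b|$ employed in Proposition~\ref{prop:boundedness_continuity_reaction_coeff}, together show that the drift of each coordinate is Lipschitz as a functional of the path $(\widetilde{\xi},\widetilde{\Lambda})_{[0,s]}$ in the supremum norm.

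Pathwise uniqueness then follows by the same Gronwall argument as in Proposition~\ref{prop:well_posed_SDE}. Given two solutions $(\widetilde{\xi},\widetilde{\Lambda})$ and $(\widetilde{\xi}',\widetilde{\Lambda}')$ driven by the same Brownian motions and initial data, I would bound $\mathbb{E}[\sup_{a\le t}\lVert(\widetilde{\xi}_a,\widetilde{\Lambda}_a)-(\widetilde{\xi}'_a,\widetilde{\Lambda}'_a)\rVert^2]$ by splitting into a drift term and a compensator term exactly as in the estimates for $A$ and $B$ there. The essential simplification is that the empirical-measure coupling is the finite sum $\frac1N\sum_j e^{-\widetilde{\Lambda}_r^j}K(\cdot-\widetilde{\xi}_r^j)$, so each difference is controlled termwise by $|\widetilde{\xi}_r^j-\widetilde{\xi}_r'^j|$ and $|\widetilde{\Lambda}_r^j-\widetilde{\Lambda}_r'^j|$, i.e.\ directly by $\sup_{r\le s}\lVert(\widetilde{\xi}_r,\widetilde{\Lambda}_r)-(\widetilde{\xi}'_r,\widetilde{\Lambda}'_r)\rVert$, with no passage through an infimum over couplings. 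A single application of Gronwall's lemma then yields $\mathbb{E}[\sup_{a\le T}\lVert(\widetilde{\xi}_a,\widetilde{\Lambda}_a)-(\widetilde{\xi}'_a,\widetilde{\Lambda}'_a)\rVert^2]=0$.

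For existence I would run a Picard iteration for the lifted $\mathbb{R}^{2N}$-system, whose convergence in $\mathbb{E}[\sup_{[0,T]}\lVert\cdot\rVert^2]$ is quantified by a factor of the type $C_T^k T^k/k!\to0$ as in Proposition~\ref{prop:well_posed_SDE}; equivalently one invokes the classical theory of path-dependent SDEs with bounded, functional-Lipschitz drift and constant diffusion, which delivers a strong solution directly (the Yamada--Watanabe theorem, \cite[p.~308]{Karatzas}, being also available to combine weak existence with pathwise uniqueness). Finally I would transfer the result to the killed particle system \eqref{eq:particle_system} verbatim as in the proof of Theorem~\ref{theo:killing}: since the lifted system is well-posed on all of $[0,T]$, the killed dynamics is recovered by sending each particle to the cemetery $\{\Delta\}$ at the first jump time $\tau^i=\inf\{t\ge0:N^{0,i}_{\widetilde{\Lambda}_t^i}=1\}$ of its time-changed Poisson process, which does not feed back into the coefficients since the survival weighting $e^{-\widetilde{\Lambda}_r^j}$ is already encoded in $\Phi$. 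The main obstacle is the path-dependence of the drift through the time-integrated convolution; this is neutralised by the uniform bounds \eqref{eq:boundedness_3}--\eqref{eq:boundedness_4} on those integrals over $[0,T]$, which confine the arguments of $b$ and of the exponential to a compact set on which all the required Lipschitz estimates hold.
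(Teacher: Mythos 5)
There is a genuine gap, and it sits exactly at the point you call a ``decisive simplification''. In the particle system \eqref{eq:particle_system} the drift of particle $i$ is computed from the empirical measure of the \emph{alive} particles, $\mu_t^N=\frac1N\sum_{j}\varepsilon_{\xi_t^j}\mathbbm{1}_{(t,\infty)}(\tau^j)$ as in \eqref{eq:empirical_measure_mu}, i.e.\ each particle enters with the \emph{realized} survival indicator $\mathbbm{1}_{\{\Lambda_t^j<Z^j\}}$ (cf.\ \eqref{eq:empirical_measure_mu_Lambda}). Your lifted system instead feeds the drift with $\Phi(\widetilde{\mu}_r^N)=\frac1N\sum_j e^{-\widetilde{\Lambda}_r^j}\varepsilon_{\widetilde{\xi}_r^j}$, which weights particle $j$ by its conditional survival \emph{probability} $e^{-\Lambda_r^j}$. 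For the McKean--Vlasov equation these two objects coincide because $\nu_t(\cdot)=\mathbb{P}(X_t\in\cdot,\Lambda_t<Z)=\mathbb{E}[\mathbbm{1}_{\{X_t\in\cdot\}}e^{-\Lambda_t}]$ after integrating out $Z$; for finite $N$ they do not coincide pathwise --- they agree only in expectation, i.e.\ in the mean-field limit. So the $\mathbb{R}^{2N}$ system you analyse (which is indeed well-posed by your Gronwall argument) is a \emph{different} SDE, and its solution is not the solution of \eqref{eq:particle_system}. The final transfer step then fails: contrary to your claim, the killing of particle $j$ \emph{does} feed back into the coefficients of all surviving particles, since $j$ is removed from the empirical sum at time $\tau^j$ and the drift of every other particle jumps. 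There is no analogue of Theorem~\ref{theo:killing} here, because the hard indicator $\mathbbm{1}_{\{\Lambda_t^j<Z^j\}}$ is a discontinuous, non-Lipschitz functional of the paths and of the exogenous variables $Z^j$, so the functional-Lipschitz framework you invoke does not cover the true dynamics.

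The paper's proof avoids this by a constructive, induction-on-deaths argument: it introduces the ordered death times $\sigma_0<\sigma_1<\dots$ and the decreasing random sets $\Gamma_n^N$ of alive indices, and on each stochastic interval $[\sigma_{n-1},\sigma_n)$ solves the classical $(N-n+1)$-dimensional SDE \eqref{eq:PS_passo_induttivo} whose drift uses only the currently alive particles; boundedness and Lipschitz continuity of $b$ (Propositions~\ref{prop:continuity_boundedness_b} and~\ref{prop:lipschitz_prop_b}) give a pathwise unique strong solution on each piece, the pieces are glued continuously at the $\sigma_n$, and the next death time is then read off from \eqref{eq:stopping_time_PS}. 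Your intermediate estimates (uniform bounds on the time-integrated convolutions, Lipschitz dependence of the drift on the paths, Gronwall in $\mathbb{R}^{2N}$ without Wasserstein) are all sound and would be exactly what is needed on each such interval; what is missing is the interval-by-interval construction that handles the discontinuous feedback of the killing into the drift.
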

\begin{proof}
    For any  fixed $T>0$, let us consider any larger time $S>T$. We consider a constructive proof of the existence of a solution of the particle system \eqref{eq:particle_system} in $[0,S)$, so that the thesis is obtained in $[0,T]$. Well-posedness in $[0,S)$ is achieved  by means of a constructive argument as in \cite[Section 3.1]{hambly2023control}. 

     For \( n = 0 \), define \( \sigma_0 = 0 \), and let \( \Gamma_0^N = \{1, \dots, N\} \) be the set of indices corresponding to the particles alive at time \( \sigma_0 \), that is, the entire collection of particles. Moreover, set \( \xi^{0,i} = \xi_0^i \) for each \( i \in \Gamma_0^N \), where \( \xi_0^i \) denotes the initial condition of the \( i \)-th particle in system \eqref{eq:particle_system}.
     
     Let \( n \geq 1 \) denote the number of alive particles. To each \( n \), we associate a triplet \( \left(\sigma_n, \Gamma_n^N, \xi^{n,\cdot}\right) \), where
    \begin{enumerate}[a)]
        \item \( \sigma_n \) is a stopping time modelling the random time at which $n$ particles out of $N$ have reacted, defined by
        \begin{equation*}
            \sigma_{n} := \inf_{t\in[0,S)}\left\{\sum_{i=1}^N \mathbbm{1}_{(t,\infty)}(\tau^i) = n\right\}.
        \end{equation*}
        Hence,  in $[0,S]$ we get a finite sequence of increasing stopping times $\sigma_0<\dots<\sigma_{n-1}<\sigma_{n}<S$.
        \item \( \Gamma_n^N = \{i_1, \ldots, i_{N-n}\} \) is the random set of indices corresponding to the particles still alive at time \( \sigma_n \). Hence, to the stopping times in a), there is associated a decreasing family of random subsets $\Gamma^N_{n}\subset\Gamma^N_{n-1}\subset \Gamma^N_{0}=\{1,\dots,N\}$. 
        \item \( \xi^{n,\cdot} \) denotes the particle process in $[\sigma_{n-1},S]$, describing the location of the  $N-n+1$ particles alive at time $\sigma_{n-1}$. It is defined as follows.
    \end{enumerate}
 For $j=1,\dots,N-(n-1)$ and for any $t\in [\sigma_{n-1},S]$, the process $\xi^{n,\cdot }_t=\left( \xi^{n,i_1}_t, \dots, \xi_t^{n,i_{N-(n-1)}}\right)$  is the unique strong solution to the system
    \begin{equation}\label{eq:PS_passo_induttivo}
        \begin{aligned}
            &d\xi_t^{n,i_j} = b\left( u_{N,n-1} \left(\cdot,\xi_t^{n,i_j}\right)(t),\nabla u_{N,n-1}\left(\cdot, \xi_t^{n,i_j}\right)(t) \right)dt \,+\, \sqrt{2}dW_t^{i_j};\\
            &\xi_{\sigma_{n-1}}^{n,i_j} = \xi^{i_j}_{\sigma_{n-1}^-},
        \end{aligned}
    \end{equation}
    where the empirical density is computed using only the particles that are alive at the fixed time \( \sigma_{n-1} \), that is,
    \begin{equation*}
        u_{N,n-1}(t,x) := \frac{1}{N} \sum_{i_j \in \Gamma^N_{n-1}} K\left(x - \xi_t^{n,i_j}\right).
    \end{equation*}
    
    The well-posedness of \eqref{eq:PS_passo_induttivo} is assured by the boundedness and Lipschitz continuity of the drift coefficient $b$ from  Proposition \ref{prop:continuity_boundedness_b} and Proposition \ref{prop:lipschitz_prop_b}. Hence, in particular, the process $\xi^{1,\cdot}$ exists and is pathwise unique in $[\sigma_0,S].$

 Finally, the solution $\xi^i_t$ to system \eqref{eq:particle_system}  is constructed, for $i=1,\dots,N$, as 
    \begin{equation}\label{eq:constructive_solution}
        \begin{split}
             \xi_t^i &= \xi_t^{n,i}, \qquad t \in [\sigma_{n-1},\sigma_{n}), \,\, i\in \Gamma_{n-1}^N;\\
             \xi_t^i &\in   \{\Delta\},  \qquad  i\notin \Gamma_{n-1}^N.
         \end{split}
     \end{equation}

    Note that the solution \eqref{eq:constructive_solution} is well-defined and matches continuously at the random partition points $\{\sigma_n\}_n$ of the time interval $[0,S)$, thanks to the specific choice of the initial condition in \eqref{eq:PS_passo_induttivo}. Moreover, given the random time \( \sigma_{n-1} \) defined in \eqref{eq:stopping_time_PS}, and using both \eqref{eq:constructive_solution} and \eqref{eq:PS_passo_induttivo}, the next stopping time \( \sigma_n \) is determined as the first time \( t \geq \sigma_{n-1} \) such that,  for some \( i \in \Gamma^N_{n-1} \),
    \begin{equation*}
        \int_0^{\sigma_{n-1}} \lambda c_0 \exp\left(-\lambda u_N(\cdot, \xi_s^i)(s)\right) ds 
        + \int_{\sigma_{n-1}}^t \lambda c_0 \exp\left(-\lambda u_N(\cdot, \xi_s^{n,i})(s)\right) ds 
        \,\geq\, Z_i.
    \end{equation*}

    This concludes the construction of the solution process to \eqref{eq:particle_system} over $[0,S)$. Uniqueness follows for the uniqueness of the strong solution on each interval $[\sigma_{n-1},\sigma_n)$. In particular, this implies existence of a strong solution \eqref{eq:constructive_solution} which is pathwise unique over the time interval $[0,T]$.
\end{proof}

\section{Conclusions}
This paper provides a rigorous probabilistic foundation for the PDE system \eqref{eq:PDE_rho_regularised},\eqref{eq:def_b_PDE} by identifying and analyzing the underlying microscopic stochastic dynamics. As previously discussed, the aim is to interpret the PDE as the macroscopic mean-field limit of a system of interacting particles driven by Brownian motion. This approach offers a microscale model capable of capturing key aspects of the physical phenomenon, in particular the reactive and non-conservative nature of the dynamics.

A related study was proposed by the same authors in \cite{2024_MTU_arxiv}. However, while both works address the same core problem, they adopt fundamentally different methodologies: the earlier work follows an analytical route, whereas the present one is rooted in a more probabilistic framework. The essential difference lies in the estimation of the law $\nu_t$ of $X_t$  under  survival, using two distinct estimators. Specifically, from \eqref{eq:subprob_killed_SDE_2}, one can estimate the sub-probability measure
$\nu_t(\cdot)=\mathbb P\left( X_t\in \cdot, \Lambda_t<Z\right)$ directly through the empirical particle measure given in \eqref{eq:empirical_measure_mu}. On the other hand, the expected value  representation in \eqref{eq:subprob_killed_SDE_2} leads to the so-called Feynman-Kac-type equation, as it appears in the McKean–Vlasov–Feynman–Kac formulation studied in \cite{2024_MTU_arxiv}. Despite methodological differences, both approaches lead to a representation of the same PDE's solution as the time marginal density of a stochastic process.

A natural further step is to explore particle systems that converge to a PDE model without any regularization by a mollifier $K$, so that  advection and reaction   depend  locally on  $\rho$. That is, we aim to achieve convergence toward the solution of the following equation:
\begin{equation}\label{eq:modello_Natalini}
    \begin{split}
        \partial_t\rho(t,x) \,=\, &\Delta\rho (t,x)\,-\, \nabla \cdot \left[ b\big(\rho (\cdot,x)(t), \nabla \rho(\cdot,x)(t)\big) \rho(t,x)\, \right] -\lambda c_0\left(-\lambda\rho(\cdot,x)(t)\right)\rho(t,x),\\
    \end{split}
\end{equation}
where $(t, x) \in (0, T] \times \mathbb{R}$, with initial conditions $\rho(0,x) = \rho_0(x)$ and $c(0,x) = c_0$, and the advection term $b$ defined in \eqref{eq:def_b_PDE}. The above equation \eqref{eq:modello_Natalini} corresponds to the original model introduced in \cite{2007_AFNT_TPM, 2007_AFNT2_TPM, 2005_GN_NLA}, which initially motivated this line of research. This convergence analysis is carried out in \cite{2025_MTU_arxiv_2}.

Finally, we stress that the main results concerning the model \eqref{eq:PDE_rho_regularised} with the specific advection and reaction terms given in \eqref{eq:def_b_PDE}, namely, Propositions \ref{prop:well_posed_SDE} and \ref{prop:well_posed_PS}, and Theorems \ref{theo:density} and \ref{theo:prob_interpretation} remain valid in a more general setting.
Indeed, since the proofs of these results rely only on the boundedness and Lipschitz continuity properties of the coefficients, the cited results also apply to generic advection and reaction terms satisfying these assumptions.

\section*{Acknowledgments}
The research is carried out within the research project PON 2021(DM 1061, DM 1062) ``Deterministic and stochastic mathematical modelling and data analysis within the study for the indoor and outdoor impact of the climate and environmental changes for the degradation of the Cultural Heritage" of the Università degli Studi di Milano. D.M. and S.U. are members of GNAMPA (Gruppo Nazionale per l’Analisi Matematica, la Probabilità e le loro Applicazioni) of the Italian Istituto Nazionale di Alta Matematica (INdAM).
  

\printbibliography

\end{document}